	\newaliascnt{lemma}{thm}
	\newtheorem{lemma}[lemma]{Lemma}  
	\newaliascnt{prop}{thm}
	\newtheorem{prop}[prop]{Proposition} 
	\newaliascnt{cor}{thm}
	\theoremstyle{remark}
	\newaliascnt{rem}{thm}
	\newtheorem{rem}[rem]{Remark}
	\theoremstyle{definition}
	\newaliascnt{exm}{thm}
	\newaliascnt{notn}{thm}
	\newaliascnt{defn}{thm}
	\newtheorem{defn}[defn]{Definition}
	\newcommand{\K}{\mathbb{K}}
	\newcommand{\rk}{\operatorname{rk}}
	\newcommand{\rmax}{\operatorname{r_{max}}}
	\newcommand{\con}{\operatorname{\partial}}
	\newcommand{\Sy}{\operatorname{Sym}}
	\newcommand{\Ker}{\operatorname{Ker}}
	\newcommand{\Ps}{\mathbb{P}}
	\newcommand{\ins}{\minushookup}
	\newcommand{\Span}[1]{\left\langle\,#1\,\right\rangle}
\begin{document}

\title[Every ternary quintic is a sum of ten fifth powers]{Every ternary quintic is a sum\\of ten fifth powers}
\author{Alessandro De Paris}
\address{Dipartimento di Matematica e Applicazioni ``Renato Caccioppoli'',\newline\indent Universit\`a di Napoli Fe\-de\-ri\-co II (Italy)}
\email{deparis@unina.it}

\begin{abstract}
To our knowledge at the time of writing, the maximum Waring rank for the set of \emph{all} ternary forms of degree $d$ (with coefficients in an algebraically closed field of characteristic zero) is known only for $d\le 4$. The best upper bound that is known for $d=5$ is twelve, and in this work we lower it to~ten.
\end{abstract}

\maketitle

\textbf{Keywords:} Waring rank, tensor rank, symmetric tensor.

\medskip
\textbf{MSC2010:} 15A21, 15A69, 15A72, 14A25, 14N05 14N15.

\section{Introduction}
The target of the present paper is the Waring problem for the set of \emph{all} forms of fixed degree and in a fixed number of variables, with coefficients in an algebraically closed field of zero characteristic. This problem is part of a body of questions which are under renewed interest, because of the recent discovery of new applications (see the book \cite{L}). General information can be found in nearly everyone of the several articles that have recently been written in this topic (e.g., in \cite{T}).

The best upper bound on the Waring rank of an arbitrary form $f$ of degree $d$ and in $n$ variables, to our knowledge at the time of writing, is given by \cite[Corollary~9]{BT}: apart from a few exceptional pairs $(n,d)$, we have
\begin{equation}\label{RBT}
\rk f\le 2\left\lceil\frac 1n\binom{n+d-1}{n-1}\right\rceil\;.
\end{equation}
That result is based on the Alexander-Hirschowitz theorem (see \cite{AH}), which gives the rank of a \emph{general} $f$, for each fixed pair $(n,d)$. One would like to determine the sharp bound $\rmax(n,d)$.  A lower bound for $\rmax(n,d)$ is given, of course, by the rank of general forms, which is half of the above upper bound. In the case of ternary forms, $n=3$, the rank of monomials (see \cite[Proposition~3.1]{CCG}) gives a better lower bound:
\[
\rmax(3,d)\ge\left\lceil\frac{d^2+2d}{4}\right\rceil\;.
\]
Lower bounds for polynomials of special type are also intensively studied (see, e.g., \cite{T}, \cite{DT}). Some upper bounds that do not rely on the Alexander-Hirschowitz theorem turn out to be better than \eqref{RBT} in low degree (see \cite[Corollary~6]{J}, \cite[Propositions~3.9 and~4.2]{BD}). In this situation, to study the unknown case with least $(n,d)$ seems a reasonable way to seek for inspiration. In this paper we find \[\rmax(3,5)\le 10\;.\] Note that, according to \cite[Proposition~3.1]{CCG}, there exist degree five monomials in three variables whose rank is nine, and that when $n=3$ and $2\le d\le 4$, as well as when $n\ge 4$ and $d\ge 2$, $\rmax(n,d)$ is not reached by monomials. On the other hand, we have some reasons to believe that to find a rank ten ternary quintic might be harder than one would expect and, at this point, we can not exclude that is impossible (\footnote{But the needed example is part of an article in preparation by Jaros{\l}aw Buczy\'{n}ski and Zach Teitler (personal communication).}).

For the introductory purposes, here we quickly recapitulate the content of \cite{D}, where a way to determine $\rmax(3,4)$ is presented, and subsequently outline the enhancements we are obtaining here.

All vector spaces are understood over a fixed algebraically closed field $\K$ of zero characteristic. We fix standard graded rings $S_\bullet=\Sy^\bullet S_1$, $S^\bullet=\Sy^\bullet S^1$ and a dual pairing between $S^1$ and $S_1$. The dual pairing naturally extends to $S^\bullet$, $S_\bullet$, giving rise to the \emph{apolarity pairing} (for details, see \cite[Introduction]{D}). The \emph{contraction operation}
\[
l\ins v\in S_\bullet\;,\qquad l\in S^\bullet,f\in S_\bullet\;,
\]
is easily described in terms of apolarity and, on the other hand, simply amounts to constant coefficients partial derivation, when dual bases
\[
x^0,\ldots ,x^n\in S^1\;,\qquad x_0,\ldots ,x_n\in S_1
\]
are fixed ($x^i\ins f\left(x_0,\ldots ,x_n\right)=\partial f/\partial x_i$). The sign $\perp$ will refer to orthogonality with respect to the apolarity pairing \emph{in fixed degree}; we shall not use it to denote apolar ideals. A projective space $\Ps V$ is understood as the set of all one-dimensional subspaces $\Span{v}$ of the vector space $V$. Given $f\in S_d$, its (Waring) rank is denoted by $\rk f$.

Let us start by recalling the situation of \cite[Lemmas~2.1 and~2.4]{D}, which deal with binary forms. From the viewpoint of rank determination, these elementary objects exhibit a nontrivial behavior which, nevertheless, is well-understood in its general lines (see, e.g., \cite{S}, \cite{IK}, \cite{CS}). So, let us assume $\dim S_1=2$ for the moment.

Let $W\subset S_4$ be a subspace of dimension three. We also put the hypothesis that, for some linearly independent $x_0,x_1\in S_1$, $W$ contains ${x_0\,}^4,{x_1\,}^4$ (as in \cite[Lemma~2.1]{D}) or ${x_0\,}^4,{x_0\,}^3x_1$ (as in \cite[Lemma~2.4]{D}). To take a geometric view, we consider the plane $\Ps W$ in the four-space $\Ps S_4$. We look at $\Ps S_d$ as the ambient of a canonical rational normal curve $C_d$, through a Veronese embedding $\nu_d:\Ps S_1\twoheadrightarrow C_d\hookrightarrow\Ps S_d$, simply given by $\nu_d\left(\Span{v}\right):=\Span{v^d}$. In the case $W\supset\left\{{x_0\,}^4,{x_1\,}^4\right\}$, the plane $\Ps W$ meets $C_4$ in (at least) two distinct points $\Span{{x_0\,}^4}$, $\Span{{x_1\,}^4}$; in the case $W\supset\left\{{x_0\,}^4,{x_0\,}^3x_1\right\}$, $\Ps W$ is tangent to $C_4$ in $\Span{{x_0\,}^4}$. This gives a line $\Ps L\subset\Ps W$, that is secant ($L=\Span{{x_0\,}^4,{x_1\,}^4}$) or tangent ($L=\Span{{x_0\,}^4,{x_0\,}^3x_1}$) to $C_4$. To regard $A:=\Ps W\setminus\Ps L$ as an affine plane with line at infinity $\Ps L$ is also convenient. The mentioned lemmas give information on the rank stratification in $W$; namely, they describe the loci
\[
R:=\left\{\Span f\in A: \rk f\ne 3\right\}\;,\qquad
R':=\left\{\Span f\in A: \rk f=4\right\}
\]
(note that $R\setminus R'$ is precisely the set of $\Span{f}\in A$ with $\rk f\le 2$). For instance, in the secant case (\cite[Lemma~4.1]{D}) we have one of the following alternatives \ref{c11}, \ref{c12}, \ref{c2}:
\begin{enumerate}
\item\label{c1} $R'$ consists of at most two points and
	\begin{enumerate}
	\item\label{c11} $R\ne\emptyset$ is an affine conic with points at infinity exactly $\Span{{x_0}^4}$, $\Span{{x_1}^4}$, and when $R$ possesses a singular point $\Span{x}$ we have $\rk x=1$ and $R'=\emptyset$; or
	\item\label{c12} $R\ne\emptyset$ is an affine line with point at infinity different from $\Span{{x_0}^4}$, $\Span{{x_1}^4}$; or
	\end{enumerate}
\item\label{c2} $R=R'\ne\emptyset$ is an affine line with point at infinity either $\Span{{x_0}^4}$ or $\Span{{x_1}^4}$, and, more precisely, $R=R'=A\cap\Ps{\Span{{x_0}^4,{x_0}^3x_1}}$ in the first case, $R=R'=A\cap\Ps{\Span{x_0{x_1}^3,{x_1}^4}}$ in the other.
\end{enumerate}
In the tangent case the alternatives are similar: see \cite[Lemma~2.4]{D}. These results are quite elementary, and may be proved in several different ways. A geometric argument of a common kind is outlined in \cite[discussion after Lemma~2.1]{D}. It is based on the projection of $A$ from $L$, $\Ps S_d\setminus\Ps L\to\Ps (S_d/L)$. Indeed, we get a point in a plane, whose position with respect to the projection of $C_4$, which is a conic, determines the alternative that occurs. \hyperref[c2]{Case~\ref{c2}} is perhaps the worst, in view of subsequent applications.

Now, let us consider ternary forms, and so assume $\dim S_1=3$, $f\in S_4$. According to \cite[Proposition~4.1]{D}, there exist distinct $\Span{x^0},\Span{x^1},\Span{l}\in\Ps S^1$ such that $x^0x^1l\ins f=0$. We look again at $\Ps S_4$ as the ambient of a Veronese embedding $\nu_4:\Ps S_1\to\Ps S_4$, $\Span{v}\mapsto\Span{v^4}$. Each of $\Ps\Span{x^0}^\perp$, $\Ps\Span{x^1}^\perp$, $\Ps\Span{l}^\perp$, is a line in $\Ps S_1$ that is mapped by $\nu_4$ into a rational normal quartic in a space of (essentially) binary forms, say $\Ps V_0$, $\Ps V_1$, $\Ps V_2$ ($V_0=\Sy^4\Span{x^0}^\perp$, etc.). Since $x^0x^1l\ins f=0$, we have $f\in V_0+V_1+V_2$. This allows to decompose $f$ (in several ways) as a sum of binary forms, each belonging to a subspace of the form $W$ described before; then, one can exploit the information provided by the lemmas to bound the rank of $f$: see the proofs of \cite[Propositions~3.1 and~5.1]{D}.

In the present work we pursue the same idea. First of all, \cite[Proposition~4.1]{D} has already been generalized (see \cite[Proposition~2.7]{BD}), and this allows us to decompose every ternary quintic into a sum of four binary quintics. Hence, by suitably generalizing \cite[Lemmas~2.1 and~2.4]{D}, we are lead to find upper bounds on the rank. The main difficulty is that a case-by-case strategy like that of \cite[Propositions~3.1 and~5.1]{D} becomes considerably more complicated, and that is why in this paper we also perform some nontrivial reductions.

Although the help of a geometric picture is invaluable to drive arguments, we also need to write down some related equations (in particular, this simplifies the extension of the analysis that was performed in the proof of \cite[Proposition~2.3]{D}). To this end, in \autoref{ALBF} we skip to a purely algebraic setting, we present an extension of \cite[Lemmas~2.1 and~2.4]{D}, and also discuss an additional condition (which has already appeared in \cite{BD}) that allows to avoid \hyperref[c2]{Case~\ref{c2}} mentioned before. Other preparatory results are set up in \autoref{Lines}. They can be regarded as complements to \cite[Proposition~2.7]{BD}, but limited to the case of quintics. The final bound is stated in \autoref{Bound}.

\section{Standing Notation}

As anticipated in the introduction, we work over an algebraically closed field $\K$ of characteristic zero, $S^\bullet, S_\bullet$ denote dually paired, standard graded rings, and contraction is denoted by $\ins$. A projective space $\Ps V$ is understood as the set of all one-dimensional subspaces $\Span{v}$ of the vector space $V$. One may set $\Ps^n:=\Ps\K^{n+1}$, and allow the classical notation $\left[a_0,\ldots ,a_n\right]$ for $\Span{\left(a_0,\ldots ,a_n\right)}\in\Ps^n$ (\footnote{In the usual formalism, $\left[a_0,\ldots ,a_n\right]=\Span{\left(a_0,\ldots ,a_n\right)}\setminus\{(0,\ldots ,0)\}$; but of course this causes no technical problems in the present setting.}). Given $f\in S_d$, its (Waring) rank will be denoted by $\rk f$.

Given $f\in S_{d+\delta}$, we define the \emph{partial polarization map}, $f_{\delta,d}:S^\delta\to S_d$, by setting $f_{d,\delta}(t):=t\ins f$. Given $x\in S^\bullet$, we shall denote by $\con_x:S_\bullet\to S_\bullet$ the contraction by $x$ operator $f\mapsto x\ins f$.

In a few cases, to interpret elements of $S^d$ as homogeneous polynomial functions on $S_1$ will be convenient. In view of that, sometimes we shall use the shortcut
\begin{equation}\label{Ev}
p(v):=\frac1{d!}p\ins v^d=\frac1{d!}\con_p\left(v^d\right)\;,\qquad p\in S^d,v\in S_1
\end{equation}
(when $d=1$ we have $p(v)=p\ins v=\con_p(v)$).

In order to efficiently manipulate parameterizations, we fix a further standard graded ring $\K\left[t^0,t^1\right]$ in two indeterminates $t^0,t^1$, and a bigraded ring
\[
\mathbf{S}:=\K\left[t^0,t^1\right]\otimes S_\bullet\;,
\]
with the bigrading being given by
\[
\mathbf{S}^\delta_d:=\K\left[t^0,t^1\right]_\delta\otimes S_d\;.
\]
The contraction operation can be extended on $\mathbf{S}$, by letting $S^\bullet$ act trivially on $\K\left[t^0,t^1\right]$. That is, we denote again by $\con_x$ the operator
\[
\operatorname{id}\otimes\con_x:\K\left[t^0,t^1\right]\otimes S_\bullet\to\K\left[t^0,t^1\right]\otimes S_\bullet
\]
for all $x\in S^\bullet$, and moreover, for all $\mathbf{f}\in\mathbf{S}$ the notation $x\ins\mathbf{f}$ will stand for $\con_x(\mathbf{f})$. Informally speaking: $t^0,t^1$ behave as constants with respect to the differential operators given by the contraction with any $x\in S^\bullet$. Also the shortcut~\eqref{Ev} can naturally be extended to any $\mathbf{v}\in\K\left[t^0,t^1\right]\otimes S_1$:
\[
p(\mathbf{v}):=\frac1{d!}p\ins \mathbf{v}^d\;.
\]
On the other hand, every element $\mathbf{f}\in\mathbf{S}$ can be evaluated in the obvious way at $(\lambda,\mu)\in\K^2$ (\footnote{That is, the evaluation homomorphism $\mathbf{ev}_{(\lambda,\mu)}:\mathbf{S}\to S_\bullet$ is simply $\operatorname{ev}_{(\lambda,\mu)}\otimes\operatorname{id}$, with $\operatorname{ev}_{(\lambda,\mu)}:\K\left[t^0,t^1\right]\to\K$ being the ordinary evaluation. If a basis $x_0,\ldots, x_n$ of $S_1$ is fixed, it amounts to the ordinary substitution $t^0\mapsto\lambda$, $t^1\mapsto\mu$ into polynomials in $t^0,t^1,x_0,\ldots ,x_n$.}), or more generally at $(\lambda,\mu)\in K^2$, with $K$ being any commutative $\K$-algebra. We shall use the notation $\mathbf{f}\restriction_{(\lambda,\mu)}$ for the evaluation of $\mathbf{f}\in\mathbf{S}$ at $(\lambda,\mu)\in K^2$, which lies in $K\otimes S_\bullet$ (in particular, it lies in $S_\bullet$ when $K=\K$ and in $\mathbf{S}$ when $K=\K\left[t^0,t^1\right]$).

\begin{rem}\label{Extend}
Every $\mathbf{f}\in\mathbf{S}^\delta_d$ gives rise to a map
\begin{equation}\label{Param}
U\to\Ps S_d\;,\qquad U\subseteq\Ps^1,\qquad\left[\lambda,\mu\right]\mapsto\Span{\mathbf{f}\restriction_{(\lambda,\mu)}}\;,
\end{equation}
that parameterizes a (rational) curve in $\Ps S_d$. Suppose that $\mathbf{f}$ admits a (homogeneous) divisor $a\in\K\left[t^0,t^1\right]\subseteq\mathbf{S}$, of positive degree. Then the parameterization \eqref{Param} is undefined at the zeroes of $a$, and $\mathbf{f}/a$ gives an extended parameterization. If $a$ is a divisor of greatest degree (among those in $\K\left[t^0,t^1\right]$), then the extended parameterization is defined on the whole of $\Ps^1$.
\end{rem}

Finally, we explicitly recall an elementary fact which holds, more generally, when the coefficients are in a field of characteristic not dividing $d$.

\begin{rem}\label{Elem}
Let $d\ge 2$ and consider a linear combination of two $d$-th powers of linear forms, with both nonzero coefficients. If it is again a $d$-th power, then the two powers must be proportional (from a geometric viewpoint: rational normal curves of degree $d\ge 2$ admit no trisecant lines).
\end{rem}

The following more general fact is also well-known.

\begin{rem}\label{Elem2}
If $\Span{v_0},\ldots,\Span{v_d}\in\Ps S_1$ are distinct, then $\Span{{v_0}^d},\ldots,\Span{{v_d}^d}\in\Ps S_d$ are linearly independent.
\end{rem}

\section{Ancillary Lemmas on Binary Forms}\label{ALBF}

Throughout this section we assume $\dim S_1=2$. To generalize the results on binary forms we outlined in the introduction, let us consider a $(d-2)$-dimensional projective subspace $\Ps W$ in $\Ps S_d$, with $d\ge 3$. Let $\nu_d:\Ps S_1\to \Ps S_d$ denote again the Veronese embedding given by $\nu_d\left(\Span{v}\right):=\Span{v^d}$, and set $C_d:=\nu_d\left(\Ps S_1\right)$. If $\Span{l}\in\Ps S^1$ and we set $\Span{v}:=\Span{l}^\perp$, then $\Span{v^d}\in S_d$ is the kernel of the restriction $S_d\to S_{d-1}$ of the contraction operator~$\con_l$. More generally, if $p\in S^\delta$ and $L:=S_d\cap\operatorname{Ker}\con_p$, then $\Ps L$ can be regarded as the subspace of $\Ps S_d$ spanned by $\nu_d(Z)$, with $Z$ in $\Ps S_1$ being given by $p=0$ and `counted with multiplicities' (\footnote{We shall not strictly need that statement, which serves only to provide a geometric insight; in any case, it could easily be made precise. For instance, using elementary scheme theory, $Z$ would be the subscheme given by $\operatorname{Proj}\left(S^\bullet/(p)\right)\hookrightarrow\operatorname{Proj}S^\bullet$, and the span would be given by the intersection of all linear subschemes that contains $\nu_d(Z)$ scheme-theoretically. One might also easily avoid schemes.}). For instance, in the situation described in the introduction, if $\Span{l^0}:=\Span{x_0}^\perp,\Span{l^1}:=\Span{x_1}^\perp$, then in the secant case we have $L=S_4\cap\operatorname{Ker}\con_{l^0l^1}$, and in the tangent case we have $L=S_4\cap\operatorname{Ker}\con_{{l^0}^2}$. Therefore, the projection $\Ps S_d\setminus\Ps L\to\Ps (S_d/L)$ can be substituted by $\Ps\left(\pi_p\right)$, with $\pi_p$ being the restriction $S_d\to S_2$ of $\con_p$. The hypothesis that $\Ps W$ meets the curve $C_d$ in a group $Z$ of $d-2$ points (counted with multiplicities), is simply replaced by $W:=S_d\cap\con_p^{-1}\left(\Span{q}\right)$, with $\Span{p}\in\Ps S^{d-2}$ and $\Span{q}\in\Ps S_2$. With those assumptions, the projection of $C_d$ is simply replaced by $C_2$.

Since the above described situation will often occur in the present paper, to set up some related notation will ease the exposition.

\begin{defn}\label{WLA}
When $\dim S=2$ and $p\in S^d$, $q\in S_e$ are nonzero (binary) forms, we set
\[
W_{p,q}:=S_{d+e}\cap\con_p^{-1}\left(\Span{q}\right)\;,\quad L_{p,e}:=S_{d+e}\cap\operatorname{Ker}\con_p\subset W_{p,q}\;,\quad A_{p,q}:=\Ps W_{p,q}\setminus\Ps L_{p,e}
\]
(actually, these spaces depend only on the points $\Span{p}\in\Ps S^d,\Span{q}\in\Ps S_e$). Moreover, we shall refer to
\[
\left\{\Span{f}\in A_{p,q}:\rk f=2\right\}\;.
\]
as \emph{the rank two locus in $A_{p,q}$}.
\end{defn}

To avoid the \hyperref[c2]{Case~\ref{c2}} that was mentioned in the introduction, note that it occurs exactly when $q$ is a square $v^2$ for some root $\Span{v}\in\Ps{S_1}$ of $p\in S^2$. In the next section we shall find suitable linear forms such that the contraction of $f$ by their product is not a square (a similar caution already appeared in \cite{BD}). This way, we also exclude the occurrence of a singularity in the \hyperref[c11]{Case~\ref{c11}}, which happens if and only if $q=v^2$ with $\Span{v}$ not a root of~$p$. Under that hypothesis, below we determine a suitable parameterization of the rank two locus.

\begin{lemma}\label{Summary}
Let $\dim S_1=2$, $\Span{p}\in S^{d-2}$, with $d\ge 3$, $\Span{q}\in\Ps S_2$, with $q$ not a square, and let $\mathring{R}$ be the rank two locus in $A_{p,q}$ (see \autoref{WLA}).

Given $x_0,x_1\in S_1$ such that $\Span{x_0x_1}=\Span{q}$, there exists a unique \[\mathbf{r}\in\K\left[t^0,t^1\right]_{d-2}\otimes W_{p,q}\subset\mathbf{S}^{d-2}_d\] such that
\[
t^0t^1\cdot\left(\mathbf{r}\restriction_{\left({t^0}^2,{t^1}^2\right)}\right)=p\left(t^0x_0-t^1x_1\right)\left(t^0x_0+t^1x_1\right)^d-p\left(t^0x_0+t^1x_1\right)\left(t^0x_0-t^1x_1\right)^d\;,
\]
and a finite subset $X\subset\Ps^1$ such that
\[
\mathring{R}=\left\{\Span{\mathbf{r}\restriction_{(\lambda,\mu)}}: [\lambda,\mu]\in\Ps^1\setminus X\right\}\;.
\]
\end{lemma}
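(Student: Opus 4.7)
The plan is to proceed in three stages: first, pin down the algebraic shape of an arbitrary rank-two element of $A_{p,q}$; second, encode that shape in a universal polynomial via a parity argument in $\mathbf{S}$; third, verify that the resulting parameterization realizes $\mathring{R}$ up to a finite exceptional set.

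Stage one will be the heart of the argument. I would take $\Span{f} \in \mathring{R}$, write $f = \alpha w_0{}^d + \beta w_1{}^d$ with $w_0, w_1 \in S_1$ linearly independent and $\alpha, \beta \in \K^*$, and use the identity $p \ins v^d = \tfrac{d!}{2} p(v) v^2$ (valid for $p \in S^{d-2}$, $v \in S_1$) to expand
\[
p \ins f = \tfrac{d!}{2}\bigl(\alpha\, p(w_0)\, w_0{}^2 + \beta\, p(w_1)\, w_1{}^2\bigr),
\]
which must be a nonzero multiple of $x_0 x_1$. The case $p(w_0) p(w_1) = 0$ would force $p \ins f = 0$, contradicting $\Span{f} \in A_{p,q}$. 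Otherwise, vanishing of the $x_0{}^2$ and $x_1{}^2$ coefficients, combined with \autoref{Elem} to exclude $w_0 \parallel w_1$, should force $\{w_0, w_1\} = \{\sigma x_0 + \tau x_1,\, \sigma x_0 - \tau x_1\}$ for some $\sigma, \tau \in \K^*$ (after absorbing scalars into $\alpha, \beta$), and the remaining relation $\alpha p(w_0) + \beta p(w_1) = 0$ forces $f \propto p(w_1) w_0{}^d - p(w_0) w_1{}^d$.

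Stage two will work formally in $\mathbf{S}$ with $v_0 := t^0 x_0 + t^1 x_1$, $v_1 := t^0 x_0 - t^1 x_1$, and
\[
\mathbf{f} := p(v_1)\, v_0{}^d - p(v_0)\, v_1{}^d \in \mathbf{S}^{2d-2}_d.
\]
Each of the substitutions $t^0 \mapsto -t^0$ and $t^1 \mapsto -t^1$ exchanges $v_0 \leftrightarrow \pm v_1$, swapping the two summands of $\mathbf{f}$ up to the overall scalar $(-1)^{2d-2} = 1$, hence negating $\mathbf{f}$; so $\mathbf{f}$ is odd in each variable, $t^0 t^1$ divides $\mathbf{f}$ in $\mathbf{S}$, and the quotient is even in each variable, lying in $\K[(t^0)^2, (t^1)^2] \otimes S_d$. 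This defines $\mathbf{r} \in \mathbf{S}^{d-2}_d$ uniquely via the stated identity (uniqueness because $t^i \mapsto (t^i)^2$ is injective on $\K[t^0, t^1]$). A further direct computation,
\[
p \ins \mathbf{f} = \tfrac{d!}{2} p(v_0) p(v_1)(v_0{}^2 - v_1{}^2) = 2\, d!\, t^0 t^1\, p(v_0) p(v_1)\, x_0 x_1,
\]
then gives $p \ins (\mathbf{r}\restriction_{(\lambda, \mu)}) \in \Span{q}$ for $(\lambda, \mu)$ on a Zariski-dense subset of $\K^2$; by linearity, $\mathbf{r} \in \K[t^0, t^1]_{d-2} \otimes W_{p,q}$.

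Stage three will match the two descriptions. By Stage one, every $\Span{f} \in \mathring{R}$ equals $\Span{\mathbf{r}\restriction_{(\sigma^2, \tau^2)}}$ for suitable $\sigma, \tau \in \K^*$. Conversely, when $[\lambda, \mu] \in \Ps^1$ satisfies $\lambda \mu \neq 0$ and some (equivalently, every) pair of square roots $(\sigma, \tau)$ satisfies $p(\sigma x_0 \pm \tau x_1) \neq 0$, the specialization $\mathbf{r}\restriction_{(\lambda, \mu)}$ is, up to a nonzero scalar, a combination of the two non-proportional $d$-th powers $v_0{}^d$, $v_1{}^d$ with both coefficients nonzero, hence of rank exactly two by \autoref{Elem}, and it lies in $A_{p,q}$ by the displayed formula; so it belongs to $\mathring{R}$. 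The exceptional set $X$ thus comprises $[1, 0]$, $[0, 1]$, and the finitely many $[\sigma^2, \tau^2]$ arising from the at most $d-2$ zeros of $p$ in $\Ps S_1$, and is therefore finite. I expect the main obstacle to be Stage one, where the rank-two hypothesis must be combined with $\Span{f} \in A_{p,q}$ to rigidify the summands into the conjugate-pair form $\{\sigma x_0 \pm \tau x_1\}$; the other stages are direct bookkeeping.
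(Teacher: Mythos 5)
Your proposal is correct and follows essentially the same route as the paper's proof: the same explicit polynomial $p\left(v_1\right){v_0}^d-p\left(v_0\right){v_1}^d$, the same parity/divisibility argument in $\mathbf{S}$ producing $\mathbf{r}$, the same expansion of $p\ins f$ forcing the two powers of a rank-two element into a conjugate pair $\sigma x_0\pm\tau x_1$, and the same exceptional set $X$. One small repair is needed in Stage one: if exactly one of $p\left(w_0\right)$, $p\left(w_1\right)$ vanishes, then $p\ins f$ is a \emph{nonzero} multiple of a square, so the contradiction comes from the hypothesis that $q$ is not a square rather than from $p\ins f=0$ as you state; the conclusion $p\left(w_0\right)p\left(w_1\right)\ne 0$ stands (and is exactly how the paper argues).
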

\begin{proof}
Let $\Span{q}=\Span{x_0x_1}$ with $x_0,x_1\in S_1$. Since $q$ is not a square, $\Span{x_0},\Span{x_1}\in\Ps{S_1}$ are distinct. In the polynomial ring $\mathbf{S}=\K\left[t^0,t^1,x_0,x_1\right]$ we have
\begin{equation}\label{Start}
\Span{t^0t^1q}=\Span{\left(t^0x_0+t^1x_1\right)^2-\left(t^0x_0-t^1x_1\right)^2}\;.
\end{equation}
Let
\[
\mathbf{g}:=p\left(t^0x_0-t^1x_1\right)\left(t^0x_0+t^1x_1\right)^d-p\left(t^0x_0+t^1x_1\right)\left(t^0x_0-t^1x_1\right)^d\;.
\]
Then, \eqref{Start} and the Lebnitz rule for contraction give
\[
p\ins\mathbf{g}\in\K\left[t^0,t^1\right]_{2d-2}\otimes\Span{q}\;,
\]
hence $\mathbf{g}\in\K\left[t^0,t^1\right]_{2d-2}\otimes W_{p,q}\subset\mathbf{S}^{2d-2}_d$. Since
\[
\mathbf{g}\restriction_{\left(0,t^1\right)}=0=\mathbf{g}\restriction_{\left(t^0,0\right)}\;,
\]
$\mathbf{g}$ is divisible by $t^0t^1$. Next, let $\mathbf{h}:=\mathbf{g}/t^0t^1\in\K\left[t^0,t^1\right]_{2d-4}\otimes W_{p,q}$ and note that
\[
\mathbf{h}\restriction_{\left(-t^0,t^1\right)}=\mathbf{h}=\mathbf{h}\restriction_{\left(t^0,-t^1\right)}\;.
\]
Therefore $\mathbf{h}\in\K\left[{t^0}^2,{t^1}^2\right]\otimes W_{p,q}$, and hence $\mathbf{h}=\mathbf{r}\restriction_{\left({t^0}^2,{t^1}^2\right)}$ for some $\mathbf{r}\in\K\left[t^0,t^1\right]_{d-2}\otimes W_{p,q}\subset\mathbf{S}^{d-2}_d$. By definition,
\begin{equation}\label{Erre}
t^0t^1\cdot\left(\mathbf{r}\restriction_{\left({t^0}^2,{t^1}^2\right)}\right)=p\left(t^0x_0-t^1x_1\right)\left(t^0x_0+t^1x_1\right)^d-p\left(t^0x_0+t^1x_1\right)\left(t^0x_0-t^1x_1\right)^d\;,
\end{equation}
and, of course, the above relation uniquely determines $\mathbf{r}$.

Since $\mathbf{r}\in\K\left[t^0,t^1\right]\otimes W_{p,q}$, we have $\mathbf{r}\restriction_{(\lambda,\mu)}\in W_{p,q}$ for all $(\lambda,\mu)\in\K^2$. If $\rho x_0+\theta x_1,\rho x_0-\theta x_1\in S_1$ are not roots of $p$ and $\rho\theta\ne 0$, then $\mathbf{g}\restriction_{(\rho,\theta)}\ne 0$, $\Span{\mathbf{r}\restriction_{(\rho^2,\theta^2)}}=\Span{\mathbf{g}\restriction_{(\rho,\theta)}}$, and $\rk\mathbf{g}\restriction_{(\rho^2,\theta^2)}=2$ (rank one is excluded by~\autoref{Elem}). Since $p\ins\mathbf{g}\restriction_{(\rho,\theta)}\ne 0$, we also have that $\Span{\mathbf{r}\restriction_{(\rho^2,\theta^2)}}$ is not in $\Ps L_{p,2}$ (hence is in $A_{p,q}$). Therefore,
\[
\Span{\mathbf{r}\restriction_{(\rho^2,\theta^2)}}\in\mathring{R}\;.
\]
Conversely, suppose that $\Span{f}\in\mathring{R}$. Then $f={v_0}^d+{v_1}^d$ for some distinct $\Span{v_0},\Span{v_1}\in\Ps S_1$ and $\Span{f}\in A_{p,q}=\Ps W_{p,q}\setminus\Ps L_{p,2}$. But $f\in W_{p,q}$ implies that $p\ins f\in\Span{q}$ and $f\not\in L_{p,2}$ implies that $p\ins f\ne 0$. Since \[p\ins f=\frac{d!}{2}\left(p\left(v_0\right){v_0}^2+p\left(v_1\right){v_1}^2\right)\;,\] and $q$ is not a square, we have that $p\left(v_0\right)$ and $p\left(v_1\right)$ are both nonzero. Then we can rescale $v_0,v_1$ and assume that
\begin{equation}\label{Eu}
f=p\left(v_1\right){v_0}^d-p\left(v_0\right){v_1}^d\;,\qquad p\ins f=\frac{d!}{2}p\left(v_0\right)p\left(v_1\right)\left({v_0}^2-{v_1}^2\right)\;.
\end{equation}
Let $\left(x^0,x^1\right)$ be the basis of $S^1$, dual to $\left(x_0,x_1\right)$. Since $p\ins f\in\Span{q}=\Span{x_0x_1}$, we have ${x^0\,}^2\ins\left({v_0}^2-{v_1}^2\right)=0={x^1\,}^2\ins\left({v_0}^2-{v_1}^2\right)$. Hence
\[
{x^0\left(v_0\right)\,}^2={x^0\left(v_1\right)\,}^2\;,\qquad{x^1\left(v_0\right)\,}^2={x^1\left(v_1\right)\,}^2\;.
\]
Since $v_0\ne v_1$, and up to possibly replace $v_1$ with $-v_1$ when $d$ is even or replace $\left(v_0,v_1\right)$ with $\left(-v_1,v_0\right)$ when $d$ is odd, we deduce
\[
x^0\left(v_0\right)=x^0\left(v_1\right)\;\qquad x^1\left(v_0\right)=-x^1\left(v_1\right)\;.
\]
Since $v_0=x^0\left(v_0\right)x_0+x^1\left(v_0\right)x_1$, $v_1=x^0\left(v_1\right)x_0+x^1\left(v_1\right)x_1=x^0\left(v_0\right)x_0-x^1\left(v_0\right)x_1$, we have that \eqref{Erre} and the first equality in \eqref{Eu} lead to
\[
f=x^0\left(v_0\right)x^1\left(v_0\right)\mathbf{r}\restriction_{\left(x^0\left(v_0\right)^2,x^1\left(v_0\right)^2\right)}\;.
\]
Moreover, we already pointed out that $v_0,v_1$ are not roots of $p$, and $x^0\left(v_0\right)x^1\left(v_0\right)\ne 0$ because $\Span{v_0}\ne\Span{v_1}$.

This way we showed that if
\[
X:=\left\{[\rho^2,\theta^2]\in\Ps^1:p\left(\rho x_0+\theta x_1\right)=0\right\}\cup\{[1,0],[0,1]\}
\]
then we have
\[
\mathring{R}=\left\{\Span{\mathbf{r}\restriction_{(\lambda,\mu)}}: [\lambda,\mu]\in\Ps^1\setminus X\right\}\;.
\]
\end{proof}

\begin{defn}\label{DS}
Throughout this paper, when $\dim S_1=2$, $\Span{p}\in S^{d-2}$ with $d\ge 3$ and $\Span{q}=\Span{x_0x_1}$ with distinct $\Span{x_0},\Span{x_1}\in\Ps S_1$, the notation $\mathbf{r}_{p,x_0,x_1}$ will refer to the polynomial $\mathbf{r}\in\mathbf{S}^{d-2}_d$ determined as in the statement of~\autoref{Summary}.
\end{defn}

For use in later calculations, below we explicitly write down some formulas, whose algebrogeometric meaning is quite elementary.

\begin{lemma}\label{Formulas}
Let $\dim S_1=2$, $p=l^1\cdots l^{d-2}$ with $d\ge 3$ and $\Span{l^1},\ldots ,\Span{l^{d-2}}\in\Ps S^1$, $q=x_0x_1$ with distinct $\Span{x_0},\Span{x_1}\in\Ps S_1$, and let $\mathbf{r}:=\mathbf{r}_{p,x_0,x_1}$ (see \autoref{DS}). For each $i$, let
\[
a^i:=l^i\left(x_0\right)^2t^0-l^i\left(x_1\right)^2t^1\in\K\left[t^0,t^1\right]\;,\qquad\Span{v_i}:=\Span{l^i}^\perp
\]
and $\left[\lambda_i,\mu_i\right]=[l^i\left(x_1\right)^2,l^i\left(x_0\right)^2]\in\Ps^1$ be the root of $a^i$ (that is, $a^i\restriction_{\left(\lambda_i,\mu_i\right)}=0$).

We have:
\begin{itemize}
\item $\forall I\subseteq\{1,\ldots ,d-2\}$, $\prod_{i\in I}a^i$ divides $\left(\prod_{i\in I}l^i\right)\ins\mathbf{r}$;
\item with $\mathbf{r}'_I$ given by $\left(\prod_{i\in I}a^i\right)\mathbf{r}'_I=\left(\prod_{i\in I}l^i\right)\ins\mathbf{r}$, if $I\subsetneq\{1,\ldots ,d-2\}$ then the map \[[\lambda,\mu]\mapsto \Span{\mathbf{r}'_I\restriction_{(\lambda,\mu)}}\] is one-to-one outside a finite subset of $\Ps^1$ {\rm (\footnote{It follows that it is, more precisely, a birational parameterization of a (rational, quasi-projective) curve.})}; moreover, \[\mathbf{r}'_{\{1,\ldots ,d-2\}}=2\,d!q\;;\]
\item given $i,j\in\{1,\ldots ,d-2\}$, we have that $l^j\left(v_i\right)=0\;\Rightarrow\;a^j\restriction_{\left(\lambda_i,\mu_i\right)}=0$;
\item $\forall i\in\{1,\ldots ,d-2\}$, $\mathbf{r}\restriction_{\left(\lambda_i,\mu_i\right)}\in\Span{{v_i}^d}$;
\item if $\mathbf{r}\restriction_{\left(\lambda_i,\mu_i\right)}=0$ then $a^j\restriction_{\left(\lambda_i,\mu_i\right)}=0$ for some $j\ne i$;
\item if $a^j\restriction_{\left(\lambda_i,\mu_i\right)}=0$ for some $j\ne i$ then $\mathbf{r}\restriction_{\left(\lambda_i,\mu_i\right)}=0$ or $l^j\left(v_i\right)=0$.
\end{itemize}
\end{lemma}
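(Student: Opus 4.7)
The plan is to apply the contraction $\con_{\prod_{i\in I}l^i}$ to both sides of the defining relation~\eqref{Erre} of $\mathbf{r}=\mathbf{r}_{p,x_0,x_1}$, and read each bullet off. Writing $u_\pm:=t^0x_0\pm t^1x_1$ and $b^i_\pm:=l^i(u_\pm)=l^i(x_0)t^0\pm l^i(x_1)t^1$, one has $p(u_\pm)=\prod_j b^j_\pm$ and the elementary identity $b^i_+b^i_-=a^i\restriction_{\left({t^0}^2,{t^1}^2\right)}$. Since $t^0,t^1$ are treated as constants by $\con$, and $\left(\prod_{i\in I}l^i\right)\ins u_\pm^d=\frac{d!}{(d-|I|)!}\prod_{i\in I}b^i_\pm\cdot u_\pm^{d-|I|}$, contracting both sides of~\eqref{Erre} yields
\[
t^0t^1\cdot\left(\left(\prod_{i\in I}l^i\right)\ins\mathbf{r}\right)\restriction_{\left({t^0}^2,{t^1}^2\right)}=\frac{d!}{(d-|I|)!}\left(\prod_{i\in I}a^i\right)\restriction_{\left({t^0}^2,{t^1}^2\right)}\cdot\mathbf{g}_I\;,
\]
where $\mathbf{g}_I:=\prod_{j\notin I}b^j_-\cdot u_+^{d-|I|}-\prod_{j\notin I}b^j_+\cdot u_-^{d-|I|}$. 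A parity check of the same kind as in the proof of \autoref{Summary} shows $\mathbf{g}_I\restriction_{(0,t^1)}=0=\mathbf{g}_I\restriction_{(t^0,0)}$ and that $\mathbf{h}_I:=\mathbf{g}_I/t^0t^1$ is invariant under $t^0\mapsto-t^0$ and $t^1\mapsto-t^1$; hence $\mathbf{h}_I=\beta_I\restriction_{\left({t^0}^2,{t^1}^2\right)}$ for a unique $\beta_I\in\K\left[t^0,t^1\right]_{d-2-|I|}\otimes S_{d-|I|}$. The squaring substitution $t^j\mapsto{t^j}^2$ being an injective ring map, the factorization lifts to $\mathbf{S}$, which proves the first bullet and identifies $\mathbf{r}'_I=\frac{d!}{(d-|I|)!}\beta_I$. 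In the extremal case $I=\{1,\dots,d-2\}$ the empty products collapse $\mathbf{g}_I$ to $u_+^2-u_-^2=4t^0t^1\,x_0x_1$, so $\mathbf{h}_I=4q$ and $\mathbf{r}'_I=2\,d!\,q$.

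For the birationality clause of the second bullet (when $I\subsetneq\{1,\dots,d-2\}$), substitute $\lambda=\rho^2$, $\mu=\theta^2$: with $v_0:=\rho x_0+\theta x_1$, $v_1:=\rho x_0-\theta x_1$, and $\tilde p:=\prod_{j\notin I}l^j$ of positive degree, the formula above rewrites
\[
\mathbf{g}_I\restriction_{(\rho,\theta)}=\tilde p(v_1)\,v_0^{d-|I|}-\tilde p(v_0)\,v_1^{d-|I|}\;,
\]
which for generic $(\rho,\theta)$ is a rank-two form of degree $d-|I|\geq 3$. The very argument of \autoref{Summary}, with $p,d$ replaced by $\tilde p,d-|I|$, recovers $[\rho^2,\theta^2]$ from such a decomposition and supplies the required generic injectivity.

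The last four bullets are then quick. For the third, $l^j(v_i)=0$ and $l^i(v_i)=0$ with $\dim S^1=2$ force $\Span{l^j}=\Span{l^i}$, so $a^j$ is a scalar multiple of $a^i$ and vanishes at $(\lambda_i,\mu_i)$. For the fourth, the first bullet with $I=\{i\}$ gives $a^i\mid l^i\ins\mathbf{r}$, and evaluating at $(\lambda_i,\mu_i)$ and commuting contraction with evaluation places $\mathbf{r}\restriction_{(\lambda_i,\mu_i)}$ in $S_d\cap\Ker\con_{l^i}=\Span{v_i^d}$. The remaining two bullets rest on the elementary factorization
\[
a^j\restriction_{(\lambda_i,\mu_i)}=l^j(v_i)\cdot l^j(w_i)\;,\qquad w_i:=l^i(x_1)x_0+l^i(x_0)x_1\;.
\]
When $l^i(x_0)l^i(x_1)\neq 0$, substituting $(t^0,t^1)=(l^i(x_1),l^i(x_0))$ in~\eqref{Erre} and using $p(v_i)=0$ (since $l^i(v_i)=0$) gives $\mathbf{r}\restriction_{(\lambda_i,\mu_i)}=-p(w_i)\,v_i^d/\left(l^i(x_0)l^i(x_1)\right)$; since $l^i(w_i)=2l^i(x_0)l^i(x_1)\neq 0$, any vanishing factor of $p(w_i)=\prod_k l^k(w_i)$ must involve some $k\neq i$, and both dichotomies in the fifth and sixth bullets follow from the factorization above. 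The main technical obstacle I expect is the coordinate-degenerate case $l^i(x_0)=0$ or $l^i(x_1)=0$: then~\eqref{Erre} at the chosen substitution becomes the trivial identity $0=0$ and conveys no information. Here a first-order expansion (say $t^1=1$, $t^0=\varepsilon$, with $l^i(x_1)=0$, so $v_i\in\Span{x_1}$ and $(\lambda_i,\mu_i)=(0,1)$) yields $\mathbf{r}\restriction_{(0,1)}$ as a scalar multiple of $x_1^d$ with scalar proportional to $\prod_{j\neq i}l^j(x_1)$, and the dichotomy follows from $a^j\restriction_{(0,1)}=-l^j(x_1)^2$; the case $l^i(x_0)=0$ is symmetric.
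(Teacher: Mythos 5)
Your proposal is correct and follows essentially the same route as the paper: contracting the defining relation of $\mathbf{r}$ to get $t^0t^1\cdot\left(p_I\ins\mathbf{r}\restriction_{\left({t^0}^2,{t^1}^2\right)}\right)=\frac{d!}{(d-k)!}\,a_I\restriction_{\left({t^0}^2,{t^1}^2\right)}\,\mathbf{s}_I$, the parity argument for divisibility, linear independence of the four $(d-k)$-th powers for generic injectivity, and evaluation at the roots of the $a^i$ (with a separate treatment of the degenerate case $l^i(x_0)l^i(x_1)=0$, which you handle by extracting the coefficient of $\varepsilon$ where the paper divides $\mathbf{s}_\emptyset$ by $t^0$). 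Your derivation of the fourth bullet from $a^i\mid l^i\ins\mathbf{r}$ is a small, clean improvement over the paper's explicit computation, but the overall argument is the same.
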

\begin{proof}
Note that
\begin{equation}\label{al}
a^i\restriction_{\left({t^0}^2,{t^1}^2\right)}=l^i\left(t^0x_0+t^1x_1\right)l^i\left(t^0x_0-t^1x_1\right)\;.
\end{equation}
With $I\subseteq\{1,\ldots ,d-2\}$, let us set $p_I:=\prod_{i\in I}l^i$, $p'_I:=p/p_I$, $a_I:=\prod_{i\in I}a^i$, $k:=\sharp(I)$ and
\begin{multline}\label{Esse}
\mathbf{s}_I:=p'_I\left(t^0x_0-t^1x_1\right)\left(t^0x_0+t^1x_1\right)^{d-k}-p'_I\left(t^0x_0+t^1x_1\right)\left(t^0x_0-t^1x_1\right)^{d-k}\;.
\end{multline}
By definition of $\mathbf{r}$ and taking into account \eqref{al}, we have
\begin{equation}\label{Ins}
t^0t^1\cdot\left(p_I\ins\mathbf{r}\restriction_{\left({t^0}^2,{t^1}^2\right)}\right)\\
=\frac{d!}{(d-k)!}\,a_I\restriction_{\left({t^0}^2,{t^1}^2\right)}\,\mathbf{s}_I\;.
\end{equation}
Since $\mathbf{s}_I\restriction_{\left(0,t^1\right)}=0=\mathbf{s}_I\restriction_{\left(t^0,0\right)}$, $\mathbf{s}_I$ is divisible by $t^0t^1$. Therefore $p_I\ins\mathbf{r}$ is divisible by~$a_I$. From \eqref{Esse} and \eqref{Ins} also easily follows that if $[\rho^2,\theta^2],\left[{\rho'}^2,{\theta'}^2\right]\in\Ps^1$ are distinct, $\rho\theta\ne 0$, $\rho'\theta'\ne 0$, and $\rho x_0+\theta x_1$, $\rho x_0-\theta x_1$, $\rho' x_0+\theta' x_1$, $\rho' x_0-\theta' x_1$ are not roots of $p'_I$, then the equality
\[
\Span{\mathbf{r}'_I\restriction_{(\rho^2,\theta^2)}}=\Span{\mathbf{r}'_I\restriction_{({\rho'}^2,{\theta'}^2)}}
\]
would imply that the $(d-k)$-th powers of $\rho x_0+\theta x_1$, $\rho x_0-\theta x_1$, $\rho' x_0+\theta' x_1$, $\rho' x_0-\theta' x_1$ are linearly dependent. But this is impossible when $I\subsetneq\{1,\ldots ,d-2\}$, because in that case we have $d-k\ge 3$ and the four linear polynomials are pairwise non-proportional (see \autoref{Elem2}). Therefore the map $[\lambda,\mu]\mapsto \Span{\mathbf{r}'_I\restriction_{(\lambda,\mu)}}$ is one-to-one outside a finite subset of $\Ps^1$, when $I\subsetneq\{1,\ldots ,d-2\}$. Moreover, from \eqref{Ins} easily follows that $\mathbf{r}'_{\{1,\ldots ,d-2\}}=2\,d!q$.

Let us prove the remaining statements (the ones about $(\lambda_i,\mu_i)$, $v_i$). By definition, $\Span{v_i}=\Span{l^i}^\perp$ for all $i$ (that is, $v_i$ is a nonzero root, unique up to a scalar factor, of $l^1$), and the statements are independent of the choice of the representatives $(\lambda_i,\mu_i)$ of $[\lambda_i,\mu_i]$ and $v_i$ of $\Span{l^i}^\perp$. Therefore we can set,  for each $i$, $\rho_i:=l^i(x_1)$, $\theta_i:=-l^i(x_0)$ and assume
\[
v_i=\rho_i x_0+\theta_i x_1\;,\qquad\left(\lambda_i,\mu_i\right)=\left({\rho_i}^2,{\theta_i}^2\right)\;.
\]
Let us also set $v'_i:=\rho_ix_0-\theta_ix_1$. From \eqref{al} we get
\begin{equation}\label{rt}
a^j\restriction_{\left({\rho_i}^2,{\theta_i}^2\right)}=l^j\left(v_i\right)l^j\left(v'_i\right)\;,\qquad\forall i,j\;.
\end{equation}
This immediately gives, for each $i,j$, the implication $l^j\left(v_i\right)=0\;\Rightarrow\;a^j\restriction_{\left(\lambda_i,\mu_i\right)}=0$. 

Taking $I=\emptyset$ in \eqref{Ins} (in other words, writing down the defining relation of $\mathbf{r}$), we get 
\[
\rho_i\theta_i\mathbf{r}\restriction_{\left({\rho_i}^2,{\theta_i}^2\right)}=\mathbf{s}_\emptyset\restriction_{\left(\rho_i,\theta_i\right)}=p\left(v'_i\right){v_i}^d-p\left(v_i\right){v'_i}^d\;.
\]
Note that $p\left(v_i\right)=0$ for all $i$,  because $\Span{v_i}=\Span{l^i}^\perp$ and $l^i$ is a factor of $p$. Therefore, if $\rho_i\theta_i\ne 0$ then
\[
\mathbf{r}\restriction_{\left({\rho_i}^2,{\theta_i}^2\right)}=\frac{p\left(v'_i\right)}{\rho_i\theta_i}{v_i}^d\in\Span{{v_i}^d}
\]
as required and, moreover,  $\mathbf{r}\restriction_{\left({\rho_i}^2,{\theta_i}^2\right)}=0$ if and only if $p\left(v'_i\right)=0$. But $\rho_i\theta_i\ne 0$ also imply $\Span{v'_i}\ne\Span{v_i}$, hence $p\left(v'_i\right)=0$ if and only if $l^j\left(v'_i\right)=0$ for some $j\ne i$. From \eqref{rt} we easily deduce that, in the case $\rho_i\theta_i\ne 0$, the two remaining statements to be proved are true.

Let us now assume $\rho_i=0$ and let $\mathbf{s}':=\mathbf{s}_\emptyset/t^0$. Since $\rho_i=0$, we have $v_i=\theta_ix_1$ and $l^i\left(x_1\right)=0$. Hence
\begin{multline}\label{sp}
\mathbf{s}'=\\
l^i\left(x_0\right)p'_{\{i\}}\left(t^0x_0-t^1x_1\right)\left(t^0x_0+t^1x_1\right)^d-l^i\left(x_0\right)p'_{\{i\}}\left(t^0x_0+t^1x_1\right)\left(t^0x_0-t^1x_1\right)^d\;,
\end{multline}
which gives $\mathbf{s}'\restriction_{(0,\theta_i)}\in\Span{{x_1}^d}=\Span{{v_i}^d}$. From \eqref{Ins} and $\theta_i\ne 0$ ($v_i\ne 0$) we deduce
\[
\mathbf{r}\restriction_{\left({\rho_i}^2,{\theta_i}^2\right)}=\frac{\mathbf{s}'\restriction_{(0,\theta_i)}}{\theta_i}\in\Span{{v_i}^d}\;,
\]
as it was to show. Moreover, $\mathbf{r}\restriction_{\left({\rho_i}^2,{\theta_i}^2\right)}=0$ if and only if $\mathbf{s}'\restriction_{(0,\theta_i)}=0$. But in view of \eqref{sp} and $l^i\left(x_0\right)\ne 0$, we have that $\mathbf{s}'\restriction_{(0,\theta_i)}=0$ if and only if $p'_{\{i\}}\left(v_i\right)=0$, that is, $l^j\left(v_i\right)=0$ for some $j\ne i$. To conclude, it suffices to note that $l^j\left(v_i\right)=0$ if and only if $a^j\restriction_{\left({\rho_i}^2,{\theta_i}^2\right)}=0$, because of \eqref{rt} and $\rho=0$.

The case $\theta_i=0$ can obviously be handled in the same way.
\end{proof}

Next results provide us with sufficient conditions to avoid that too much binary quintics of high rank arise.

\begin{rem}
Let $\dim S_1=2$ and $\Ps W$ be a projective plane in $\Ps S_5$ (i.e., $\dim W=3$). According to \cite[Proposition~4.1]{BD} (\footnote{There is probably a mistake in the proof of that proposition given in \cite{BD}, but to write down a completely correct and detailed proof does not take long.}), there exists a nonempty (Zariski) open subset $U$ of $\Ps W$ such that $\rk f\le 4$ for all $\Span{f}\in U$. For some special $\Ps W$, that result can not be improved, in the sense that for no nonempty open subset $U$ of $\Ps W$ we can have $\rk f\le 3$ for all $\Span{f}\in U$. Indeed, let us take nonzero $x,y\in S^1$, and set $W:=S_5\cap\Ker\con_{x^2y}$, $\Span{u}:=\Span{x}^\perp$, $\Span{v}:=\Span{y}^\perp$. We show that $\rk f\ge 4$ for all $f\in W\setminus\Span{u^5,v^5}$ (which implies nonexistence of $U$). Let $f\in W\setminus\Span{u^5,v^5}$ and $I_f\subset S^\bullet$ be its apolar ideal, that is,
\[
I_f:=\left\{h\in S^\bullet: \con_h(f)=0\right\}=\bigoplus_{d}\Ker f_{d,5-d}\;.
\]
According to \cite[Theorem~1.44(iv)]{IK}, $I_f$ is generated by a form $h\in S^s$ and a form $h'\in S^{7-s}$, with $s\le 3$. But $x^2y\in I_f$, hence $h$ divides $x^2y$ (because $\deg h'=7-s\ge 4)$). Now, if $h$ is squarefree then \cite[Lemma~1.31]{IK} gives $f\in\Span{u^5,v^5}$, which is excluded. Henceforth, the same lemma gives $\rk f=7-s\ge 4$ (\footnote{It would be easy to exhibit, pursuing the same arguments, the whole rank stratification on~$\Ps W$.}).
\end{rem}

It is well-known that the rank of a generic form of degree $d=2s$ or $d=2s+1$ is $s+1$, that is, there exists a nonempty open subset of $\Ps S_d$ such that $\rk f=s+1$ for all $\Span{f}\in U$. Actually, we have a bit more: the set of all $\Span{f}\in\Ps S_d$ with $\rk f=s+1$ is open (and nonempty). This fact is probably widely known as well, but we prefer to give a precise explanation, because we lack a reference.

\begin{rem}\label{Generic}
Let $C_d$ be the curve given by $d$-th powers in $\Ps S_d$, with $\dim S_1=2$. Let $\sigma_r\left(C_d\right)$, $r\le\left\lfloor\frac{d+1}2\right\rfloor$, be the $r$-th secant variety of $C_d$ (see, e.g., \cite[Definition~5.1.1.2]{L}). Let us recall that $\sigma_r\left(C_d\right)=\sigma_r\left(C_d\right)^{\mathrm{lo}}\cup \sigma_r\left(C_d\right)^{\mathrm{hi}}$, with
\[
\sigma_r\left(C_d\right)^{\mathrm{lo}}:=\{P\in\Ps S_d: \rk P\le r\}\;,\qquad\sigma_r\left(C_d\right)^{\mathrm{hi}}:=\{P\in\Ps S_d: \rk P\ge d+2-r\}\;,
\]
by the Comas-Seiguer theorem (see \cite[Theorem~9.2.2.1]{L}).

When $d$ is even, $d=2s$, it immediately follows that the set $U$ of all $\Span{f}\in\Ps S_d$ with $\rk f=s+1$ is the complement of $\sigma_s\left(C_d\right)$, which is a projective variety, hence a (Zariski) closed subset. Therefore $U$ is open.

When $d$ is odd, $d=2s+1$, then the set to be proved being closed is $X:=\sigma^s\left(C_d\right)\cup\sigma^{s+1}\left(C_d\right)^{\mathrm{hi}}$. A proof that $X$ is a projective variety may go as that one for the secant varieties (we do not need to prove irreducibility which, nevertheless, holds as well). One may look at the incidence variety $V\subset\Ps S^{s+1}\times\Ps S_d$, $V:=\left\{\left(\Span{h},\Span{f}\right):h\ins f=0\right\}$ (in geometric terms, the condition prescribes that $\Span{f}$ lie in the subspace spanned by the subscheme $\nu^{s+1}\left(Z\right)$, with $Z\subset\Ps S_1$ being given by $h=0$ and $\nu^{s+1}:\Ps S^1\to\Ps S^{s+1}$ being the Veronese embedding). Now, \cite[Lemma~1.31 and Theorem~1.44(i, iv)]{IK} imply that $\Span{f}\in X$ if and only if $h\ins f=0$ for some nonzero $h\in S^{s+1}$ that is not squarefree. But for nonzero binary forms squarefree means nonsigular;  hence, if $Y\subseteq\Ps S^{s+1}$ is the locus given by the discriminant, which is a projective variety, we have $X=\pi_2\left(\pi_1^{-1}(Y)\right)$, with $\pi_1,\pi_2$ being the projections of $\Ps S^{s+1}\times\Ps S_d$. To conclude, it suffices to recall the basic algebrogeometric result that the image of a projective variety through a morphism is a projective variety as well.
\end{rem}

\begin{lemma}\label{3}
Let $\dim S_1=2$, $\Span{p}\in\Ps S^2$, $\Span{t}\in\Ps S_3$. If $t$ is not a cube then there exists a nonempty open subset $U$ of $\Ps W_{p,t}$ (see \autoref{WLA}) such that $\rk f=3$ for all $\Span{f}\in U$.
\end{lemma}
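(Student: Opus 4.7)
First, $W_{p,t}$ has dimension~$3$: for nonzero $p\in S^2$ the contraction $\con_p\colon S_5\to S_3$ is surjective (the dual multiplication map $S^3\to S^5$ is injective, as $S^\bullet$ is a domain), so $L_{p,3}$ has dimension~$2$ and $W_{p,t}=\con_p^{-1}(\Span{t})$ has dimension~$3$; thus $\Ps W_{p,t}$ is a projective plane. By \autoref{Generic}, the locus $\{\Span{f}\in\Ps S_5:\rk f=3\}$ is a nonempty open subset of $\Ps S_5$, so its intersection with $\Ps W_{p,t}$ is automatically open in $\Ps W_{p,t}$. The statement therefore reduces to exhibiting a single $\Span{f}\in\Ps W_{p,t}$ with $\rk f=3$.

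I would look for $f=\gamma_1 u_1^5+\gamma_2 u_2^5+\gamma_3 u_3^5$ with $\Span{u_1},\Span{u_2},\Span{u_3}\in\Ps S_1$ pairwise distinct and nonzero $\gamma_i\in\K$. Since $p\ins u^5=20\,p(u)\,u^3$, the condition $f\in W_{p,t}$ becomes $\sum_i\gamma_i p(u_i)u_i^3\in\Span{t}$. Imposing additionally $p(u_i)\neq 0$ for all~$i$, the task reduces to producing pairwise distinct $\Span{u_i}\in\Ps S_1$, none a root of $p$, and nonzero scalars $\beta_i\in\K$ with $t=\sum_i\beta_i u_i^3$; one then sets $\gamma_i:=\beta_i/p(u_i)$.

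To produce such a decomposition, I would parameterize by the $\Ps^2$-family of hyperplanes of $\Ps S_3$ containing $\Span{t}$: each such hyperplane $H$ meets the rational normal cubic $C_3:=\nu_3(\Ps S_1)\subset\Ps S_3$ in three points $\Span{u_1^3},\Span{u_2^3},\Span{u_3^3}$ (with multiplicity), automatically yielding a decomposition $t=\sum_i\beta_i u_i^3$. For a generic $H$ the three $\Span{u_i}$ are distinct and none is a root of $p$ (each failure cuts out a codimension-one subfamily). The nonvanishing of the $\beta_i$'s is automatic when $\rk t=3$, since otherwise $t$ would be a sum of at most two cubes; when $\rk t=2$, writing $t=l_1^3+l_2^3$, a $\beta_j=0$ forces $\Span{t}$ to lie on a chord of $C_3$ through two of the $\Span{u_i^3}$, which by uniqueness of the rank-two decomposition of the binary cubic $t$ must coincide with $\overline{\Span{l_1^3}\,\Span{l_2^3}}$---imposing on $H$ the codimension-one condition of containing $\Span{l_1^3}$ (equivalently $\Span{l_2^3}$). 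Setting $\gamma_i:=\beta_i/p(u_i)$ and $f:=\sum_i\gamma_i u_i^5$ then gives $\con_p f=20\,t\neq 0$, so $f\in W_{p,t}$ with $\rk f\le 3$; equality $\rk f=3$ follows from \autoref{Elem2}, as a rank-$\le 2$ decomposition of $f$ would produce a nontrivial linear relation among at most five pairwise distinct fifth powers. The delicate point in this plan is the nonvanishing of the $\beta_i$'s in the case $\rk t=2$, which depends on the uniqueness of the rank-two decomposition of a binary cubic.
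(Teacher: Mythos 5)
Your argument is correct, and although it manufactures essentially the same candidate form as the paper --- one annihilated by a squarefree cubic $x^1x^2x^3\in\Span{t}^\perp$ whose roots avoid those of $p$ (your hyperplanes through $\Span{t}$ are exactly the elements of $\Ps\left(\Span{t}^\perp\right)$, and the three intersection points with $C_3$ are the roots of the corresponding cubic) --- it certifies $\rk f=3$ by a genuinely different mechanism. The paper takes $\Span{f}$ to be the single point $W_{p,t}\cap\Ker\con_{x^1x^2x^3}$, gets $\rk f\le 3$ from \cite[Lemma~1.31]{IK}, and excludes $\rk f\le 2$ by apolarity: a quadric $q'$ with $q'\ins f=0$ is forced to coincide with the generator $q$ of $\Ker t_{2,1}$, after which $f\in\Ker\con_{q}\cap\Ker\con_{x^1x^2x^3}$ yields a contradiction; this is why the paper also insists that $x^1,x^2,x^3$ not divide $q$. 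You instead keep the decomposition $f=\sum_i\gamma_i{u_i}^5$ explicit, arrange for all coefficients to be nonzero, and conclude from \autoref{Elem2} applied to at most five distinct fifth powers. The price is the case split on $\rk t\in\{2,3\}$ and the appeal to the uniqueness of the rank-two decomposition of a binary cubic; the gain is that the auxiliary quadric $q$ and the apolar-ideal bookkeeping disappear entirely. Both routes rest on the same genericity input at the start: your assertion that distinctness of the three points fails only in codimension one amounts to saying that the net $\Ps\left(\Span{t}^\perp\right)$ is not contained in the discriminant, which follows from base-point-freeness (i.e., $t$ not a cube) together with Bertini in characteristic zero --- precisely the step the paper also leaves implicit after its ``without fixed points'' remark. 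Everything else (the dimension count for $W_{p,t}$, the reduction via \autoref{Generic} to exhibiting one rank-three element, the computation $\con_pf=20\,t$) checks out.
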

\begin{proof}
In view of the above \autoref{Generic}, to find a rank three $f\in W_{p,t}$ will suffice.

Let $K:=\Span{t}^\perp=\Ker t_{3,0}$ ($\dim K=3$). Since $t$ is not a cube, for no $x\in S^1$ we can have $K=xS^2$ (in geometric terms, the linear series on $\Ps S_1$ given by $\Ps K$ is without fixed points). Moreover, we have $\Ker t_{2,1}=\Span{q}$ for some $\Span{q}\in\Ps S^2$. Therefore we can find distinct $\Span{x^1},\Span{x^2},\Span{x^3}\in\Ps S^1$ such that $x^1x^2x^3\in K$ and, moreover, $x^1$, $x^2$, $x^3$ do not divide $p$ nor $q$. By dimension reasons, we have $W_{p,t}\cap\Ker\con_{x^1x^2x^3}=\Span{f}$ for some $\Span{f}\in\Ps W_{p,t}$ (the intersection is nonzero because $x^1x^2x^3\in K$, and the sum is  $\Span{x^1x^2x^3p}^\perp$ because $x^1x^2x^3$ and $p$ are coprime). Since $x^1x^2x^3\ins f=0$, \cite[Lemma~1.31]{IK} gives $f\in\Span{{v_1}^5,{v_2}^5,{v_3}^5}$, where $\Span{v_i}:=\Span{x^i}^\perp$ for each $i$; hence $\rk f\le 3$.

To exclude that $\rk f\le 2$, note that in this case we have $q'\ins f=0$ for some $\Span{q'}\in\Ps S^2$. Note also that $S_5\cap\Ker\con_{p}\cap\Ker\con_{x^1x^2x^3}=\{0\}$, otherwise $S_5\cap\Ker\con_{p}$ and $S_5\cap\Ker\con_{x^1x^2x^3}$ would be both contained in a subspace of dimension four, necessarily of the form $\Span{h,h'}^\perp$ with linearly independent $h,h'\in S^5$. This would mean that the coprime forms $x^1x^2x^3\in S^3$ and $p\in S^2$ divide both $h$ and $h'$, which is impossible (cf.\ also \autoref{Elem2}). Now, $q'\ins f=0$ and $S_5\cap\Ker\con_{p}\cap\Ker\con_{x^1x^2x^3}=\{0\}$ imply that $q'\ins t=0$, hence $\Span{q'}=\Span{q}$. This would lead to $f\in\Ker\con_{q}\cap\Ker\con_{x^1x^2x^3}$, and hence $S_5\cap\Ker\con_{q}\cap\Ker\con_{x^1x^2x^3}\ne\{0\}$, which can be excluded as before.
\end{proof}

\begin{lemma}\label{Ka}
Let $\dim S_1=2$, $\Span{x}\in\Ps S^1$, $\Span{f}\in\Ps S_4$, $D:=S_5\cap{\con_x}^{-1}\left(\Span{f}\right)$. If for infinitely many $\Span{g}\in\Ps D$ we have $\rk g\ge 4$, then there exists $\Span{y}\in\Ps S^1$ such that
\[
x^2y\ins g=0\;\quad\forall g\in D
\]
or
\[
xy^2\ins g=0\;\quad\forall g\in D\;.
\]
\end{lemma}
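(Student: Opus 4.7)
The plan is to translate the rank hypothesis into a geometric statement about a line in $\Ps S^3$, classify the resulting line as a ruling of the discriminant surface, and extract the required $y$. Choose $v\in S_1\setminus\{0\}$ spanning $\Span{x}^\perp$; then $\Ker(\con_x\colon S_5\to S_4)=\Span{v^5}$, so $D=\Span{g_0,v^5}$ for any fixed $g_0\in D$ with $x\ins g_0=f$. Writing $g_\lambda:=g_0+\lambda v^5$, one has $x\ins g_\lambda=f$ and $x\ins v^5=0$; consequently the conclusion $x^2y\ins g=0$ for all $g\in D$ is equivalent to $xy\ins f=0$, and $xy^2\ins g=0$ for all $g\in D$ to $y^2\ins f=0$. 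The task therefore reduces to finding a nonzero $y\in S^1$ satisfying one of these two conditions.

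Consider the pencil $\mu_\lambda\colon S^3\to S_2$, $h\mapsto h\ins g_\lambda$, and set $K_\lambda:=\Ker\mu_\lambda=I_{g_\lambda}\cap S^3$. Decomposing $\mu_\lambda=\mu_0+\lambda\mu'$ with $\mu'(h):=h\ins v^5$, one sees that $\mu'$ has rank $1$, image $\Span{v^2}$, and kernel $xS^2$; hence $K_\lambda\subseteq V:=\mu_0^{-1}(\Span{v^2})$ for every $\lambda$. By the classification of apolar ideals of binary quintics (\cite[Theorem~1.44(iv) and Lemma~1.31]{IK}), $\rk g_\lambda\ge 4$ forces $K_\lambda$ to contain a nonzero non-squarefree cubic of the form $(x')^2y'$. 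In the main case $\dim V=2$, a short linear-algebra argument shows that $\lambda\mapsto\Span{K_\lambda}$ is a birational parametrization of the projective line $\Ps V\subset\Ps S^3$; the hypothesis then forces infinitely many points of $\Ps V$ into the discriminant surface $\Delta\subset\Ps S^3$ of non-squarefree cubics, whence $\Ps V\subseteq\Delta$.

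The key structural step will be classifying lines on $\Delta$: since $\Delta$ is the image of the bidegree-$(2,1)$ morphism $\psi\colon\Ps^1\times\Ps^1\to\Ps S^3$, $(\Span{x'},\Span{y'})\mapsto\Span{(x')^2y'}$ (which is one-to-one off the diagonal), the preimage in $\Ps^1\times\Ps^1$ of any line contained in $\Delta$ is a curve whose image in $\Ps S^3$ has degree one; this forces the bidegree of the preimage to be $(0,1)$, so every line on $\Delta$ is a ruling $\bigl\{\Span{(x'_0)^2y'}:y'\in S^1\bigr\}$ for some fixed $\Span{x'_0}$. Hence $V=(x'_0)^2S^1$. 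Unwinding this, the cubic $p:=(x'_0)^2\ins g_0\in S_3$ satisfies $y'\ins p\in\Span{v^2}$ for every $y'\in S^1$, which a coordinate computation in a basis of $S_1$ containing $v$ forces to give $p\in\Span{v^3}$. Then
\[
(x'_0)^2\ins f=(x'_0)^2\ins(x\ins g_0)=x\ins\bigl((x'_0)^2\ins g_0\bigr)=0,
\]
the last equality using $p\in\Span{v^3}$ and $x\ins v^3=0$; so the conclusion $xy^2\ins g=0$ for all $g\in D$ holds with $y:=x'_0$.

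Two degenerate cases remain. If $\lambda\mapsto\Span{K_\lambda}$ is constant, a fixed nonzero non-squarefree cubic $h$ lies in every $K_\lambda$, so $h\ins v^5=0$ forces $x\mid h$; writing $h=(x')^2y'$, divisibility by $x$ forces $x'$ or $y'$ to be proportional to $x$, directly yielding one of the two conclusions. If $\dim V\ge 3$, then $\dim K_\lambda\ge 2$ for generic $\lambda$, so $I_{g_\lambda}$ has a quadratic generator $q_\lambda$, and $\rk g_\lambda\ge 4$ forces $q_\lambda=(x'_\lambda)^2$ to be a square; the identity $(x')^2S^1+(x'')^2S^1=S^3$ for distinct $\Span{x'},\Span{x''}\in\Ps S^1$ (easily checked), together with $K_\lambda\subseteq V\subsetneq S^3$, forces $q_\lambda=(x'_0)^2$ to be independent of $\lambda$, and then $x(x'_0)^2\in I_g\cap S^3$ for every $g\in D$ yields the second conclusion. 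The main obstacle will be the clean handling of these degenerate cases together with the classification of lines on $\Delta$.
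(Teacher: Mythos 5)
Your argument is correct and reaches the same two structural endpoints as the paper (either all cubics apolar to $f$ share a fixed square factor, or a single non-squarefree cubic annihilates all of $D$), but the execution is genuinely different. The paper works directly with the fixed two-dimensional space $K=\Ker f_{3,1}\subset S^3$ (which coincides with your $V$, since $h\ins f=x\ins(h\ins g_0)$): it cites a Bertini-type fact to conclude that if infinitely many $\Span{k}\in\Ps K$ are divisible by a square then $K=y^2S^1$, and otherwise runs a pigeonhole argument over the finitely many non-squarefree classes in $\Ps K$ to find one cubic apolar to two independent elements of $D$, hence to $v^5$, forcing divisibility by $x$. You instead track the moving kernels $K_\lambda$, show they sweep out the line $\Ps V$, and replace the Bertini-type citation by the classification of lines on the discriminant surface $\Delta$ (the tangent developable of the twisted cubic) via the bidegree computation for $\psi$; your constant-pencil and $\dim V\ge 3$ cases then play the role of the paper's pigeonhole and of its preliminary reduction $\rk f_{3,1}=2$. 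What the paper's route buys is brevity and elementarity; what yours buys is a self-contained proof of the ``fixed square factor'' step and a clearer geometric picture of why it holds.

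One small point to patch: in the case $\dim V\ge 3$ you write $K_\lambda\subseteq V\subsetneq S^3$ as if the strict inclusion were automatic, but you have not excluded $V=S^3$. This subcase is harmless --- $V=S^3$ means $\operatorname{im}\mu_0\subseteq\Span{v^2}$, so $\dim K_\lambda\ge 3$ for every $\lambda$, which by the structure of apolar ideals forces $I_{g_\lambda}$ to have a linear generator and hence $\rk g_\lambda\le 1$, contradicting the hypothesis that infinitely many $\Span{g}\in\Ps D$ have $\rk g\ge 4$ --- but it should be said. The remaining sketched steps (the linearity of $\lambda\mapsto\Span{K_\lambda}$ on $\Ps V$, the injectivity of $\psi$, the coordinate computation giving $p\in\Span{v^3}$) all check out.
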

\begin{proof}
If $\rk f_{3,1}=1$, then $y\ins f=0$ for some nonzero $y\in S^1$. Hence $xy\ins g=0$ for all $g\in D$, and the properties to be proved are both true. Therefore, we can assume that $K:=\Ker f_{3,1}$ is two-dimensional.

If for infinitely many $\Span{k}\in\Ps K$ we have that $k$ is divisible by a square, then all $k\in K$ are divisible by a fixed square $y^2$, with $y\in S^1$ (\footnote{It is an easy case of Bertini's theorem (see also \cite[Lemma~1.1, Remark~1.1.1]{K}). Actually, it would also be easy to show that if $k$ is divisible by a square for at least nine $\Span{k}\in\Ps K$ then all $k\in K$ are divisible by a fixed square.}). Hence $K=y^2S^1$, that implies $y^2\ins f=0$ and therefore $xy^2\ins g=0$ for all $g\in D$. Thus, we can assume that $k$ is divisible by a square only for a finite number of $\Span{k}\in\Ps K$.

From \cite[Lemma~1.31 and Theorem~1.44(i,~iv)]{IK} follows that if $g\in S_5$ and $\rk g\ge 4$, then $z^1{z^2\,}^2\ins g=0$ for some nonzero $z^1,z^2\in S^1$ (it is an instance of a fact we already pointed out in \autoref{Generic}). Hence, for each of the infinitely many $\Span{g}\in\Span{D}$ with $\rk g\ge 4$, we can choose $z^1,z^2$ such that $z^1{z^2\,}^2\ins g=0$. If $g\not\in\operatorname{Ker}\con_x$ then we also have $z^1{z^2\,}^2\in K$. But $\operatorname{Ker}\con_x\cap S_5$ consists of exactly one point $\Span{v^5}$ (with $\Span{v}=\Span{x}^\perp$), and $k$ is divisible by a square only for a finite number of $\Span{k}\in\Ps K$. Therefore there exist two distinct points of $\Span{D}$ that give the same $\Span{z^1{z^2\,}^2}\in\Ps K$. Since $\dim D=2$ (because the restriction $S_5\to S_4$ of $\con_x$ is surjective and has a one-dimensional kernel), this implies that $z^1{z^2\,}^2\ins g=0$ for all $g\in D$. In particular, $z^1{z^2\,}^2\ins v^5=0$, which means that $z^1{z^2\,}^2$ is divisible by $x$, and this proves our statement (with $\Span{y}=\Span{z^2}$ if $\Span{x}=\Span{z^1}$ or $\Span{y}=\Span{z^1}$ if $\Span{x}=\Span{z^2}$).
\end{proof}

The above result is closely related with some nice and more general geometric facts, which we think are worthy of being quickly outlined. Indeed, $\Ps D$ is a line in $\Ps S_5$ that meets the rational normal curve $C_5$ in $\Span{v^5}$ (at least). We can generalize the result by dropping this hypothesis on the position of the line with respect to~$C_5$. What matters is that infinitely many points in $\Ps D$ lie on a plane spanned by a divisor of the type $2P+Q$ on $C_5$ (in the scheme-theoretic sense, and with $P,Q$ possibly coinciding). Taking into account \autoref{Generic}, we have that \emph{every} point in $\Ps D$ lies on a plane spanned by a divisor of the type $2P+Q$. We want to show that $\Ps D$ is contained in one of those planes (in the situation of \autoref{Ka}, it easily follows that $\Span{v^5}$ must coincide with $P$ or with~$Q$). Let us suppose the contrary, and then note that `the divisor $2P+Q$ must move' and that the projection of $C_5$ from the line $\Ps D$ is a curve $C'$ in a three-dimensional projective space, for which each divisor $2P+Q$ becomes aligned. The divisor $2P$ can not be fixed, otherwise (since $Q$ moves) $\Ps D$ would be the tangent to $C_5$ in $P$ and therefore contained in \emph{each} of the planes. We also have $Q\ne P$ for a generic choice of the divisor, otherwise $C'$ would be a line (by a well-known result in characteristic zero). We conclude that $C'$ is a space curve, not contained in a plane, such that the generic tangent meets it in another point. When $C'$ is nonsingular, that is exactly what a relevant result of algebraic geometry excludes: see \cite[Theorem~3.1]{Ka} (\footnote{I thank Edoardo Ballico for suggesting the reference \cite{Ka} and correcting my initial overlooking of the regularity hypothesis in the statement of the theorem.}).

Since the results in \cite{Ka} are quite deep, and leave out the case when $C'$ is cuspidal (see also \cite[Remark~3.8]{Ka}), we also point out that when the degree of $C'$ is at most five (as in the situation of \autoref{Ka}), they can be proved by using more elementary considerations, and the restriction on $C'$ can be removed. We quickly outline the proof using an informal, but not uncommon language. First, note that the point $Q$ as well can not be fixed, otherwise the projection from $Q$ would be inseparable on $C'$ (which is impossible since $\operatorname{char}\,\K=0$). Now, if we move a generic divisor $2P+Q$ in its first order infinitesimal neighborhood, then we get a plane containing $3P+2Q$. But this plane must meet the curve in the further point of intersection of the tangent in $Q$. This point, for a generic $2P+Q$, is distinct from $P$, otherwise the generic tangent would be a bitangent (and this is excluded by another known result of projective differential geometry). Since our curve is of degree at most five, to note that it can not intersect a plane in a degree six divisor suffices. To make such informal considerations rigorous is routine when the base field $\K$ is the complex field. For an arbitrary $\K$ of characteristic zero, one might easily use the techniques developed in~\cite{DI}.

\begin{lemma}\label{Lr}
Let $\dim S_1=2$, $\Span{q}\in\Ps S_2$, $\Span{x^1x^2x^3}\in\Ps S^3$, with $x^1,x^2,x^3\in S^1$, $A:=A_{x^1x^2,q}$ (see \autoref{WLA}) and
\[
E:=\left\{\Span{f}\in A: \rk g\ge 4\text{ for infinitely many }\Span{g}\in\Ps\left(S_5\cap{\con_{x^3}}^{-1}\left(\Span{f}\right)\right)\right\}\;.
\]
Then there exists a finite set $F$ such that for each $\Span{f}\in E$ we have  $\Span{\con_{x^1}f}\in F$ or $\Span{\con_{x^2}f}\in F$.
\end{lemma}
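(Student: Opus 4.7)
The plan is to apply \autoref{Ka} with $x=x^3$ to each $\Span f\in E$. Since $\con_{x^3}\colon S_5\to S_4$ is surjective, we may pick $g\in D:=S_5\cap\con_{x^3}^{-1}(\Span f)$ with $x^3\ins g=f$; contracting the two alternatives of \autoref{Ka} against this $g$ and commuting contractions yields a nonzero $y\in S^1$ (depending on $f$) such that either
\[
\text{(I)}\ \ x^3y\ins f=0\qquad\text{or}\qquad\text{(II)}\ \ y^2\ins f=0.
\]

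Case~(II) is easy. Since $\dim S_1=2$, $y^2\ins f=0$ forces $f=v^3\ell$ with $v:=\Span{y}^\perp$ and $\ell\in S_1$. A direct expansion shows that $\con_{x^1x^2}$ maps $v^3\cdot S_1$ into $v\cdot S_1$, so the requirement $\Span f\in A$ (i.e., $x^1x^2\ins f\in\Span q\setminus\{0\}$) forces $v$ to be one of the at most two roots of $q$, and then $\ell$ is uniquely determined up to scalar. Hence case~(II) contributes at most two $\Span f\in A$; absorb their $\con_{x^i}$-images into $F$.

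Case~(I) says $x^3\ins f\in S_3$ is zero or a cube (since in dimension two a nonzero cubic is annihilated by a linear form iff it is a cube). Consider $M:=\con_{x^3}|_W\colon W\to S_3$, whose kernel is $W\cap\Span{v^4}$ for $v:=\Span{x^3}^\perp$, which is nontrivial exactly when $q\propto v^2$. If $M$ is injective, $M(\Ps W)$ is a plane in $\Ps S_3\cong\Ps^3$ and meets the twisted cubic $C_3$ in at most three points by B\'ezout; so case~(I) yields only finitely many $\Span f$, again absorbed into $F$. If $\Ker M=\Span{v^4}$ (the delicate subcase), assume first that $x^1,x^2,x^3$ are pairwise non-proportional and set $u_i:=\Span{x^i}^\perp$; by \autoref{Elem2}, $W=\Span{u_1^4,u_2^4,v^4}$ (as $u_i^4\in L\subset W$ and $v^4\in W$ when $q\propto v^2$), so $M(W)=\Span{u_1^3,u_2^3}$. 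By \autoref{Elem}, the only cubes on the line $\Ps M(W)$ are $\Span{u_1^3}$ and $\Span{u_2^3}$, hence the case~(I) locus in $\Ps W$ is the union of the two lines $\Ps\Span{u_i^4,v^4}$, $i=1,2$. Since $x^i\ins u_i^4=0$ and $x^i\ins v^4=4x^i(v)v^3\ne 0$, the map $\con_{x^i}$ contracts $\Ps\Span{u_i^4,v^4}$ to the single point $\Span{v^3}$. Hence for every $\Span f$ on $\Ps\Span{u_i^4,v^4}$ we have $\Span{x^i\ins f}=\Span{v^3}\in F$. The degenerate situation $x^3\in\{x^1,x^2\}$ is handled by the same analysis with a suitably adapted basis of $W$.

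Taking $F$ to be the union of the finitely many points produced above, the conclusion follows. The main obstacle is the delicate subcase $q\propto(\Span{x^3}^\perp)^2$, where $E$ is genuinely infinite (a union of two lines in $\Ps W$) and the disjunction in the conclusion is essential; the proof hinges on the observation that the case~(I) lines pass through the base points $\Span{u_i^4}$ of $\con_{x^i}$, which forces the appropriate projection to collapse each line to a single point.
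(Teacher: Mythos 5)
Your first step is the paper's: \autoref{Ka} with $x=x^3$ gives, for each $\Span{f}\in E$, a nonzero $y\in S^1$ with $x^3y\ins f=0$ or $y^2\ins f=0$. From there you diverge, analysing the two loci inside the plane $\Ps W$ geometrically, whereas the paper writes down an explicit candidate set $F$ of cubics and checks membership by apolarity; in the generic configuration your Case~(I) analysis is sound and in fact identifies the same collapsing phenomenon the paper encodes in its forms $v_j$, $v^j_k$. The genuine gap is in Case~(II): the claim that the locus $\left\{\Span{f}\in A: y^2\ins f=0\text{ for some nonzero }y\right\}$ consists of at most two points is false. Writing $f=v^3\ell$ with $\Span{v}=\Span{y}^\perp$, you correctly observe $x^1x^2\ins\left(v^3\ell\right)\in vS_1$, hence $v$ divides $q$; but if $x^1(v)=0$, i.e.\ $\Span{v}=\Span{x^1}^\perp$, then $x^1x^2\ins\left(v^3\ell\right)=3x^1(\ell)x^2(v)v^2$ lies in $\Span{v^2}$ for \emph{every} $\ell$. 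So when $q$ is proportional to $v^2$ with $v$ a root of $x^1x^2$ --- exactly the configuration the paper flags after \autoref{WLA} as the troublesome one --- the whole pencil $\Ps\left(v^3S_1\right)$ lies in $\Ps W_{x^1x^2,q}$ and meets $A$ in infinitely many points, and your assertion that ``$\ell$ is uniquely determined up to scalar'' fails. The conclusion of the lemma still holds for these $f$, since $\con_{x^1}\left(v^3\ell\right)=x^1(\ell)v^3$ collapses the pencil to the single point $\Span{v^3}$ (this is why the paper's $F$ is $\left\{\Span{{x_0}^3}\right\}$ when $q$ is a square), but your proof as written does not establish it.

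A second, related concern is that the degenerate configurations are dispatched in one sentence, whereas they are where the paper's proof spends most of its effort, and they cannot be excluded in the intended application: in \autoref{Decomp} the forms $x^1,x^2,x^3$ are residues of distinct linear forms modulo $l^1$ and may become pairwise proportional, and $q=l^2l^3l^4\ins f$ is only assumed nonzero, so it may be a square. In particular, your statement that $\Ker\left(\con_{x^3}\vert_W\right)$ is nontrivial ``exactly when $q\propto v^2$'' already fails when $\Span{x^3}\in\left\{\Span{x^1},\Span{x^2}\right\}$, since then $v^4\in L_{x^1x^2,2}\subset W$ regardless of $q$; and the subcase $\Span{x^1}=\Span{x^2}$, where $L_{x^1x^2,2}=u^3S_1$ with $\Span{u}=\Span{x^1}^\perp$ is no longer spanned by two fourth powers, is not addressed at all. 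Each of these can be worked out along your lines --- the fibres of the relevant contraction still collapse to points --- but until that is done the proof is incomplete precisely where the statement is delicate.
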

\begin{proof}
Let us decompose $q=x_0x_1$ and let
\begin{equation}\label{U}
u_0:=x^3\left(x_1\right)x_0+x^3\left(x_0\right)x_1\;,\qquad u_1:=x^3\left(x_1\right)x_0-x^3\left(x_0\right)x_1\;,
\end{equation}
\[
v_1:=x^1\left(u_1\right){u_0}^3-x^1\left(u_0\right){u_1}^3\;,\qquad v_2:=x^2\left(u_1\right){u_0}^3-x^2\left(u_0\right){u_1}^3\;,
\]
\[
v^h_k:=x^h\left(x_{1-k}\right){x_k}^3-3x^h\left(x_k\right){x_k}^2x_{1-k}\;,\qquad\forall h\in\{1,2\},k\in\{0,1\}\;.
\]
Note that
\begin{equation}\label{vhk}
\con_{x^h}v_h,\,\con_{x^h}v^h_k,\in\Span{x_0x_1}=\Span{q}\;,\qquad\forall h\in\{1,2\},k\in\{0,1\}\;.
\end{equation}
Then, let us define
\[
F:=\left\{\Span{{x_0}^3}\right\}\;,\qquad\text{if }\Span{x_0}=\Span{x_1}
\]
and
\[
F:=\left\{\Span{v_1},\Span{v_2},\Span{v^1_0},\Span{v^1_1},\Span{v^2_0},\Span{v^2_1}\right\}\;,\qquad\text{if }\Span{x_0}\ne\Span{x_1}\;.
\]
Let $\Span{f}\in E$ and $D:={S_5\cap\con_{x_3}}^{-1}\left(\Span{f}\right)$. According to~\autoref{Ka}, there exists $\Span{z^3}\in\Ps S^1$ such that ${x^3\,}^2z^3\ins g=0$ for all $g\in D$,  or $x^3{z^3\,}^2\ins g=0$ for all $g\in D$. Therefore,
\begin{equation}\label{Or}
x^3z^3\ins f=0\qquad\text{or}\qquad {z^3\,}^2\ins f=0\;.
\end{equation}
If $\Span{z^3}\in\left\{\Span{x^1},\Span{x^2}\right\}$, let $i\in\{1,2\}$ be such that $\Span{x^i}=\Span{z^3}$. In the case when $\Span{z^3}\not\in\left\{\Span{x^1},\Span{x^2}\right\}$ but $\Span{x^3}\in\left\{\Span{x^1},\Span{x^2}\right\}$, let $i\in\{1,2\}$ be such that $\Span{x^i}=\Span{x^3}$. Otherwise, let us fix $i\in\{1,2\}$ at leisure. Let us set \[f':=\Span{\con_{x^i}f}\] and denote by $j\in\{1,2\}$ the index other than $i$ (for short, $j:=3-i$). To prove that $\Span{f'}\in F$ will suffice.

Recall that $\Span{f}\in A=W_{x^1x^2,q}\setminus L_{x^1x^2,2}$ (see \autoref{WLA}), and $f\in W_{x^1x^2,q}$ implies that $\con_{x^j}f'\in\Span{q}$ and $f\not\in L_{x^1x^2,2}$ implies that $\con_{x^j}f'\ne 0$. Hence \[\Span{\con_{x^j}f'}=\Span{q}\;.\] We also point out that if $l\ins f'=0$ for some nonzero $l\in S^1$, then the required statement $\Span{f'}\in F$ follows. Indeed, in that case we necessarily have $f'=w^3$ for some nonzero $w\in S_1$. Hence $\con_{x^j}f'\in\Span{w^2}$ and we know that $\Span{\con_{x^j}f'}=\Span{q}$. Thus $\Span{w^2}=\Span{q}$, which leads to $\Span{w}=\Span{x_0}=\Span{x_1}$. Therefore $\Span{w^3}=\Span{{x_0}^3}\in F$ as required.

Suppose first that $\Span{x^i}=\Span{z^3}$. Then, from \eqref{Or} follows that $x^3\ins f'=0$ or $x^i\ins f'=0$ and this leads to $\Span{f'}\in F$, as pointed out above.

Then, we can assume $\Span{x^i}\ne\Span{z^3}$ (henceforth, $\Span{x^j}\ne\Span{z^3}$ as well, by the choice of $i$). If $\Span{x^i}=\Span{x^3}$ and $x^3z^3\ins f=0$, then $z^3\ins f'=0$ and we have again $\Span{f'}\in F$. It follows that to prove the result in the following two cases will suffice:
\begin{itemize}
\item ${z^3\,}^2\ins f'=0$, or
\item $x^3z^3\ins f'=0$ and $\Span{x^i}\ne\Span{x^3}$ (henceforth, $\Span{x^j}\ne\Span{x^3}$ as well).
\end{itemize}
Suppose first that $\Span{x_0}=\Span{x_1}$. In the first case, we have ${z^3\,}^2\ins {x_0}^2=0$, hence $z^3\left(x_0\right)=0$. Therefore $x^jz^3\ins f'=0$. But $\Span{x^j}\ne\Span{z^3}$ and ${z^3\,}^2\ins f'=0$ as well. This gives $z^3\ins f'=0$ and therefore $\Span{f'}\in F$. In the second case, we have $x^3z^3\ins {x_0}^2=0$, hence $x^3\left(x_0\right)=0$ or $z^3\left(x_0\right)=0$. Therefore $x^jx^3\ins f'=0$ or $x^jz^3\ins f'=0$. But $\Span{x^j}$ does not coincide with $\Span{z^3}$, nor with $\Span{x^3}$ (since we are dealing with the second case). Hence $x^3\ins f'=0$ or $z^3\ins f'=0$, and therefore $\Span{f'}\in F$.

Suppose, finally, that $\Span{x_0}\ne\Span{x_1}$. In the first case, we have ${z^3\,}^2\ins q=0$, hence $z^3\left(x_k\right)=0$ for some $k\in\{0,1\}$. If $z^3\left(x_k\right)=0$, then ${z^3\,}^2\ins v^j_k=0$; and $v^j_k\ne 0$ because $\Span{x_0}\ne\Span{x_1}$. Let $\Span{y_j}:=\Span{x^j}^\perp$ and note that $S_3\cap\operatorname{Ker}\con_{x^j}=\Span{{y_j}^3}$. If $\Span{f'}\ne\Span{v^j_k}$, then ${y_j}^3\in\Span{f',v^j_k}$ because $\con_{x^j}f'\in\Span{q}$ and $\con_{x^j}v^j_k\in\Span{q}$ by~\eqref{vhk}. Since ${z^3\,}^2\ins f'={z^3\,}^2\ins v^j_k=0$, from ${y_j}^3\in\Span{f',v^j_k}$ we deduce ${z^3\,}^2\ins {y_j}^3=0$, which is excluded because $\Span{z^3}\ne\Span{x^j}$. Hence $\Span{f'}\ne\Span{v^j_k}$ is excluded, so we get $\Span{f'}=\Span{v^j_k}\in F$ as required. In the second case, we have $x^3z^3\ins x_0x_1=0$. If $x^3\left(x_0\right)=0$ or $x^3\left(x_1\right)=0$, we deduce that $\Span{x^3}=\Span{z^3}$ and we fall again in the first case. Hence we can assume that $x^3\left(x_0\right)\ne 0$ and $x^3\left(x_1\right)\ne 0$. Now, we also have
\[
0=x^3z^3\ins x_0x_1=x^3\left(x_0\right)z^3\left(x_1\right)+x^3\left(x_1\right)z^3\left(x_0\right)\;.
\]
Hence, from \eqref{U} we get $z^3\left(u_0\right)=0$. Since also $x^3\left(u_1\right)=0$, we have that $x^3z^3\ins v_j=0$; and $v_j\ne 0$ since $x^3\left(x_0\right)\ne 0$ and $x^3\left(x_1\right)\ne 0$. As before, from \eqref{vhk} and $\con_{x^j}f'\in\Span{q}$ we deduce that if $\Span{f'}\ne\Span{v_j}$ then $x^3z^3\ins{y_j}^3=0$. But this is excluded, since $\Span{x^3}\ne\Span{x^j}$ and $\Span{z^3}\ne\Span{x^j}$. Thus, we can conclude with $\Span{f'}=\Span{v_j}\in F$.
\end{proof}

\section{On Apolar Configurations of Lines}\label{Lines}

As explained in the introduction, the basic idea we are pursuing to bound the rank of $f$ is to find suitable $l^1,\ldots ,l^k\in S^1$, such that $l^1\cdots l^k\ins f=0$. In more geometric terms, we are dealing with configuration of lines of the plane $\Ps S_1$, that are apolar to the target form $f$. After a brief preparation, soon we shall start finding appropriate configurations.

\begin{lemma}\label{Drop}
Assume $\dim S_1=3$, and recall that we are taking scalars in a field of characteristic zero. If $f\in S_d$ and $V\subset S^1$ is a two-dimensional subspace, then (at least) one of the following statements is true:
\begin{itemize}
\item $l\ins f$ is a $d$-th power for at most two choices of $\Span{l}\in\Ps V$; or
\item there exists $\Span{l}\in\Ps V$ such that $l\ins f=0$.
\end{itemize}
\end{lemma}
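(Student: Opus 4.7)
My plan is to analyse the partial polarization $\phi\colon V\to S_{d-1}$, $l\mapsto l\ins f$, that is, the restriction of $f_{1,d-1}$ to $V$, and split on whether $\phi$ is injective. The second alternative of the dichotomy is equivalent to $\ker\phi\ne 0$: any nonzero element of $\ker\phi$ is a witness. So from here on I assume $\phi$ is injective and aim to prove that at most two points of $\Ps V$ have image a power of a linear form.

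Geometrically, $\Ps\phi(V)\subset\Ps S_{d-1}$ is a projective line, and the locus of powers of linear forms in $\Ps S_{d-1}$ is the Veronese image of $\Ps S_1$ (a surface, since $\dim S_1=3$). The claim is that this line can meet the Veronese in at most two points, and I expect that to follow essentially from \autoref{Elem}. In detail, suppose three distinct $\Span{l_0},\Span{l_1},\Span{l_2}\in\Ps V$ satisfy $l_i\ins f={v_i}^{d-1}$ with $v_i\in S_1\setminus\{0\}$ (the $v_i$ are nonzero by injectivity of $\phi$). Choosing representatives with $l_2=\alpha l_0+\beta l_1$, both coefficients are nonzero since the three projective points are pairwise distinct, and linearity of contraction yields
\[
{v_2}^{d-1}=\alpha\,{v_0}^{d-1}+\beta\,{v_1}^{d-1}\;.
\]
Now \autoref{Elem} (applicable in degree $d-1\ge 2$) forces $\Span{{v_0}^{d-1}}=\Span{{v_1}^{d-1}}$; extracting $(d-1)$-th roots in the algebraically closed field $\K$ of characteristic zero then gives $\Span{v_0}=\Span{v_1}$, so $\Span{\phi(l_0)}=\Span{\phi(l_1)}$. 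This contradicts injectivity of $\phi$ on the linearly independent pair $l_0,l_1$.

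The only point I expect to need care on is the reading of ``$d$-th power'' in the statement: since $l\ins f\in S_{d-1}$, the natural sense is ``power of a linear form'', i.e., a $(d-1)$-th power. With this reading the argument works for $d\ge 3$, which is ample for the intended applications to ternary quintics ($d=5$). No deeper obstacle is anticipated: the lemma amounts to combining the kernel/image dichotomy for $\phi$ with the familiar fact, recorded in \autoref{Elem}, that rational normal curves of degree $\ge 2$ admit no trisecant lines.
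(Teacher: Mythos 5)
Your proof is correct and takes essentially the same route as the paper's: both reduce the dichotomy to \autoref{Elem} (no trisecants to the rational normal curve), the only difference being that you assume the second alternative fails ($\phi$ injective) and derive the first, while the paper assumes the first fails and produces a nonzero $l$ with $l\ins f=0$ from the resulting proportionality. Your remark about ``$d$-th power'' really meaning a power of a linear form in $S_{d-1}$ matches how the lemma is actually applied (to squares and cubes) later in the paper.
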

\begin{proof}
We assume that the first of the listed statements is false and prove the second. We can choose linearly independent $x^1,x^2$ such that $V=\Span{x^1,x^2}$ and $x^1\ins f={v_1}^d$, $x^2\ins f={v_2}^d$ for some $v_1,v_2\in S_1$. If $v_1=0$ or $v_2=0$ the second statement trivially follows. If not, since there exists one more
\[
\Span{x^3}\in\Span{x^1,x^2}\setminus\left(\Span{x^1}\cup\Span{x^2}\right)
\]
such that $x^3\ins f$ is a $d$-th power, from \autoref{Elem} and linearity on the left of the contraction operation~$\ins$ follows that $\Span{{v_1}^d}=\Span{{v_2}^d}$, and henceforth $x^2\ins f\in\Span{{v_1}^d}$. Then, again by linearity on the left of contraction, we can find a nonzero $l\in\Span{x^1,x^2}=V$ such that $l\ins f=0$.
\end{proof}

The following results help to find appropriate apolar configurations of lines, under appropriate hypotheses.

\begin{rem}\label{Recap}
Let $f\in S_d$, with $\dim S_1=3$, and $\Span{x}\in\Ps S^1$ be such that $x^2\ins f=0$. Suppose that $\Span{p}\in\Ps S^\delta$ is such that $p\ins f=0$ and the curve $p=0$ in $\Ps S_1$ intersects the line $x=0$ in (exactly) $\delta$ distinct points, that henceforth are simple points of the curve. We have that the $\delta$ (distinct) tangents $l^1=0$, $\ldots$, $l^\delta=0$ to the curve in the intersection points are such that $l^1\cdots l^\delta\ins f=0$ (cf.\ the second part of the proof of \cite[Proposition~5.2]{D}).
\end{rem}

\begin{lemma}\label{Double}
Let $f\in S_5$, with $\dim S_1=3$, and suppose that there exist distinct $\Span{x^1},\Span{x^2}\in\Ps S^1$ such that
\[
x^1{x^2\,}^2\ins f=0\;.
\]
Then (at least) one of the following statements is true:
\begin{itemize}
\item there exists a nonzero $l^3\in S^1$ such that $\Span{x^1},\Span{x^2},\Span{l^3}$ are distinct and $x^1x^2l^3\ins f=0$; or
\item there exist nonzero $l^2,l^3,l^4\in S^1$ such that $\Span{x^1},\Span{l^2},\Span{l^3},\Span{l^4}$ are distinct, $x^1l^2l^3l^4\ins f=0$ and
no one of $x^1l^3l^4\ins f$, $x^1l^2l^4\ins f$, $x^1l^2l^3\ins f$ is a square.
\end{itemize}
\end{lemma}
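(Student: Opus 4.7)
The plan is to reduce to the ternary quartic $g:=x^1\ins f\in S_4$, for which the hypothesis becomes ${x^2}^2\ins g=0$. Setting $h:=x^2\ins g\in S_3$ gives $x^2\ins h=0$; in other words, $h$ lies in the symmetric cube of the two-dimensional subspace $\Span{x^2}^\perp\subset S_1$, so it is essentially a binary cubic. The whole argument then splits on the structure of~$h$.

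If $h=0$, I take any $l^3\in S^1$ with $\Span{l^3}\notin\{\Span{x^1},\Span{x^2}\}$, so that $x^1x^2l^3\ins f=l^3\ins h=0$, yielding the first alternative. If $h$ is a nonzero cube $v^3$, with $v\in S_1$ nonzero, then the apolar locus $\{l\in S^1:l\ins h=0\}$ coincides with $\Span{v}^\perp\subset S^1$ (a two-dimensional subspace, necessarily containing~$x^2$ since $x^2\ins h=0$); so any $l^3\in\Span{v}^\perp\setminus(\Span{x^1}\cup\Span{x^2})$ realizes the first alternative.

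The main case is when $h$ is nonzero and not a cube. Here I plan to produce the second alternative via \autoref{Recap}, applied to $g$ with $x=x^2$: provided $p\in(g^\perp)_3$ is such that the plane cubic $p=0$ in $\Ps S_1$ meets the line $x^2=0$ transversely in three simple points of the cubic, the three tangent lines at those intersections furnish $l^2,l^3,l^4\in S^1$ with $l^2l^3l^4\ins g=0$, equivalently $x^1l^2l^3l^4\ins f=0$. Existence of such $p$ follows from a dimension count: $(g^\perp)_3$ is the kernel of the contraction map $S^3\to S_1$, of dimension at least $10-3=7$; whereas $x^2\cdot S^2\subset S^3$ has dimension six. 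So some $p\in(g^\perp)_3$ is not divisible by $x^2$, its restriction to $x^2=0$ is a nonzero binary cubic, and by varying $p$ in a pencil one generically achieves a squarefree restriction, hence three simple intersections.

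What remains is to arrange the distinctness and non-square conditions: $\Span{l^2},\Span{l^3},\Span{l^4}$ are pairwise distinct and none equals $\Span{x^1}$, and no $l^il^j\ins g\in S_2$ is a square. Each of these is a closed condition as $p$ varies in the open subfamily of $\Ps(g^\perp)_3$ producing three simple intersections (a family of projective dimension at least three), so a generic such $p$ should avoid all of them. The main obstacle---and the most technical step---is to check that each bad condition is indeed a proper subvariety rather than the whole family: here I would use \autoref{Drop} on appropriate pencils of partial polarizations, which rules out the pathological alternative in which every $l^il^j\ins g$ (along a pencil of tangent choices) is a perfect square, because that alternative would force a linear form to annihilate a suitable contraction of~$g$ and hence, after contracting by $x^2$, a nontrivial apolar relation on~$h$ incompatible with $h$ being a nonzero non-cube.
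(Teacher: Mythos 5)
Your overall strategy is the same as the paper's: reduce to $g=x^1\ins f$ and the binary cubic $h=x^2\ins g$, dispose of the cases $h=0$ and $h$ a cube by producing an apolar line through the right points (this matches the paper's case ``$h_{2,1}:T^2\to T_1$ not surjective'', with $T^\bullet:=S^\bullet/\left(x^2\right)$), and in the main case produce an apolar plane cubic $p$ meeting $x^2=0$ transversely and invoke \autoref{Recap}. Your dimension count for the existence of $p\in\Ker g_{3,1}$ not divisible by $x^2$ is correct. Two points need tightening. First, squarefreeness of the generic restriction: the image of $\Ker g_{3,1}$ in $T^3$ is exactly $\Span{h}^\perp$, and the linear system it cuts on the line $x^2=0$ is base-point-free precisely because $h$ is nonzero and not a cube; only then does Bertini give a squarefree general member (an arbitrary pencil will not do --- a pencil containing a multiple of $x^2$ has constant restriction).

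The genuine gap is the non-square condition, which is the entire point of the second alternative. \autoref{Drop} is not the right tool: it concerns $d$-th powers of $l\ins f$ as $\Span{l}$ ranges over a \emph{pencil of linear forms}, whereas your bad conditions concern $l^il^j\ins g$ for products of two tangent lines that are determined nonlinearly by $p$ and cannot be varied independently of one another; there is no pencil along which Drop's dichotomy applies to the actual tangent lines produced by \autoref{Recap}. The argument that does work --- and is the one in the paper --- is to contract by $x^2$: if $l^il^j\ins g=u^2$, then with $L^i:=l^i+\left(x^2\right)$ one gets $L^iL^j\ins h=x^2\ins u^2=2\left(x^2\ins u\right)u$; the left-hand side lies in $T_1=\Span{x^2}^\perp$, so if it were nonzero then $u$ would be a nonzero element of $T_1$ with $x^2\ins u\ne 0$, a contradiction. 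Hence squareness forces $L^iL^j\ins h=0$, i.e.\ $L^iL^j$ lies in $\Ker h_{2,1}$, which is \emph{one-dimensional} exactly because $h$ is not a cube. So it suffices to choose the restriction $\bar p=L^2L^3L^4$ so that no product of two of its factors is proportional to that single bad quadratic --- a proper closed condition on $\bar p\in\Ps\Span{h}^\perp$ --- and then non-squareness holds for every choice of lift $p$ and of tangent lines. Your closing sentence gestures at ``a nontrivial apolar relation on $h$,'' which is the right idea, but the step is not carried out, and the reduction to a condition on $\bar p$ alone (rather than a genericity claim about the tangent lines themselves) is what makes the proof close.
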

\begin{proof}
Let us set
\[
f':=x^1\ins f\;,\quad G:=x^2\ins f'\;.
\]
Since $x^2\ins G=x^1{x^2\,}^2\ins f=0$, we have $G\in T_\bullet:=\Sy^\bullet\Span{x^2}^\perp=\Ker\partial_{x^2}\subset S_\bullet$. Now, $T_\bullet$ and $T^\bullet:=S^\bullet/\left(x^2\right)$ are rings of binary forms, with an apolarity pairing induced by that between $S^\bullet$ and $S_\bullet$.

Suppose first that $G_{2,1}:T^2\to T_1$ is not surjective (equivalently, $\rk G\le 1$). In this case we have $L^3\ins G=0$ for some nonzero $L^3\in T^1$. But $L^3=l^3+\left(x^2\right)$, and we can choose a representative $l^3\in S^1\setminus\Span{x^1}$ (besides $\not\in\Span{x^2}$). This leads to $x^1x^2l^3\ins f=0$ with distinct $\Span{x^1},\Span{x^2},\Span{l^3}\in\Ps S^1$, as prescribed in the first statement.

Then, let us assume that $G_{2,1}:T^2\to T_1$ is surjective and look at the linear system on $\Ps T_1$ (the line $x^2=0$ in $\Ps S_1$) that is cut by polynomials in $V:=\Span{G}^\perp=\Ker G_{3,0}\subset T^3$. For no $L^3\in T^1\setminus\{0\}$ we can have $V=L^3T^2$, otherwise $L^3\ins G=0$, which is possible only when $G_{2,1}:T^2\to T_1$ is not surjective. Since $\dim V=3$, this means that the linear system cut by $V$ is without fixed points. Therefore we can find distinct $\Span{L^2},\Span{L^3},\Span{L^4}\in\Ps T^1\setminus\left\{\Span{X^1}\right\}$, with $X^1:=x^1+\left(x^2\right)\in T^1$, such that $H:=L^2L^3L^4\in V$ (hence $H\ins G=0$). Since $\dim\Ker G_{2,1}=1$, the linear forms $L^2,L^3,L^4$ can be chosen so that, moreover,
\begin{equation}\label{L}
L^3L^4\ins G\ne 0,\quad L^2L^4\ins G\ne 0,\quad L^2L^3\ins G\ne 0\;.
\end{equation}
Let us fix (at leisure) $x_2\in S_1$ such that $x^2\ins x_2=1$, and set $F:=f'-x_2G$. Since $x^2\ins x_2=1$ and $x^2\ins G=0$, we have
\[
x^2\ins F=\left(x^2\ins f'\right)-G=0\;,
\]
hence $F\in T_4$. Since $G_{2,1}$ is surjective, there exists $K\in T^2$ such that
\[
K\ins G=-H\ins F\;.
\]
Let $h\in S^3$ be the representative of $H\in T^\bullet=S^\bullet/\left(x^2\right)$ that lies in $\Sy^3\Span{x_2}^\perp$ (that is, $h\in\Sy^3\Span{x_2}^\perp$, $H=h+(x^2)$). Let us also choose (at leisure) a representative $k\in S^2$ of $K$ and set $p:=h+x^2k$. Note that $h\ins x_2G=0$, because $h\in\Sy^3\Span{x_2}^\perp$, and recall that $x^2\ins F=0$. We have
\[
p\ins f'=\left(h+x^2k\right)\ins\left(F+x_2G\right)=h\ins F+k\ins G=H\ins F+K\ins G=0\;.
\]
But the curve $p=0$ in $\Ps S_1$ intersects the line $\Ps T_1$ in three distinct points, given by $\Span{L^2}^\perp,\Span{L^3}^\perp,\Span{L^4}^\perp$, because $p+\left(x^2\right)=h+\left(x^2\right)=H=L^2L^3L^4$. According to \autoref{Recap}, we get three tangents $l^2=0$, $l^3=0$, $l^4=0$, such that $x^1l^2l^3l^4\ins f=l^2l^3l^4\ins f'=0$. Of course, we can assume $L_i=l^i+(x^2)$ for each $i\in\left\{2,3,4\right\}$, and since $\Span{L^2},\Span{L^3},\Span{L^4}$ are distinct and lie in $\Ps T^1\setminus\left\{\Span{X^1}\right\}$, we have that $\Span{x^1},\Span{l^2},\Span{l^3},\Span{l^4}$ are distinct.

Finally, suppose that there exist distinct $i,j\in\left\{2,3,4\right\}$ such that $l^il^j\ins f'=u^2$ for some $u\in S_1$. This assumption leads to
\[
L^iL^j\ins G=l^il^j\ins G=x^2\ins u^2=2\left(x^2\ins u\right)u\;.
\]
In view of \eqref{L}, this would imply that $x^2\ins u\ne 0$, which is impossible (because it means that, on one side, $u\not\in\Span{x^2}^\perp=T_1$ and, on the other, $2(x^2\ins u)u=L^iL^j\ins G\in T_1$ is a nonzero multiple of $u$ lying in $T_1$). Hence $x^1l^3l^4\ins f$, $x^1l^2l^4\ins f$ and $x^1l^2l^3\ins f$ are not squares, as prescribed in the second statement.
\end{proof}

We shall also need to particularize the situation of the above lemma, as described in the following remark.

\begin{rem}\label{Double3}
With $f$  and $x^2$ as in \autoref{Double}, suppose that ${x^2\,}^2\ins f=0$, so that we can choose $\Span{x^1}\in\Ps S^1\setminus\left\{\Span{x^2}\right\}$ at leisure. We point out that we can choice $\Span{x^1}$ so that it fulfills the second condition listed in the lemma, or the condition $x^1x^2\ins f=0$. More explicitly:
\begin{itemize}
\item there exists $\Span{x^1}\in\Ps S^1\setminus\{\Span{x^2}\}$ such that $x^1x^2\ins f=0$; or
\item there exist distinct $\Span{x^1},\Span{l^2},\Span{l^3},\Span{l^4}\in\Ps S^1$ such that $x^1l^2l^3l^4\ins f=0$ and no one of $x^1l^3l^4\ins f$, $x^1l^2l^4\ins f$, $x^1l^2l^3\ins f$ is a square.
\end{itemize}
Indeed, let us look at the beginning of the proof of \autoref{Double}, and note that the second condition could be excluded only if $G_{2,1}$ is not surjective, that is, $G$ is not a cube. In the present situation $G$ depends on the choice of $x^1$, hence if $G$ is not a cube for some choice of $x^1$, then the second condition is fulfilled. In the opposite case, it suffices to exploit \autoref{Drop} with $x^2\ins f$ in place of $f$ and with $V$ being whatever two-dimensional subspace of $S^1$ that does not contain $x^2$.
\end{rem}

\begin{lemma}\label{Square}
Let $f\in S_5$, with $\dim S_1=3$. Then (at least) one of the following facts is true:
\begin{itemize}
\item there exist distinct $\Span{l^1},\Span{l^2},\Span{l^3}\in\Ps S^1$ such that $l^1l^2l^3\ins f=0$; or
\item there exist distinct $\Span{l^1},\Span{l^2},\Span{l^3},\Span{l^4}\in\Ps S^1$ such that $l^1l^2l^3l^4\ins f=0$ and no one of $l^1l^3l^4\ins f$, $l^1l^2l^4\ins f$, $l^1l^2l^3\ins f$ is a square.
\end{itemize}
\end{lemma}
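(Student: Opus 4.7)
I would begin by invoking \cite[Proposition~2.7]{BD} to obtain distinct $\Span{l^1}, \Span{l^2}, \Span{l^3}, \Span{l^4} \in \Ps S^1$ with $l^1 l^2 l^3 l^4 \ins f = 0$, and set $q_i := \left(\prod_{j \ne i} l^j\right) \ins f \in S_2$, which satisfies $l^i \ins q_i = 0$. If some $q_i$ vanishes, the three remaining forms give the first alternative directly. If at most one $q_i$ is a nonzero square, I would relabel so that it is $q_1$; then $q_2, q_3, q_4$ are exactly the three three-fold contractions $l^1 l^3 l^4 \ins f$, $l^1 l^2 l^4 \ins f$, $l^1 l^2 l^3 \ins f$ appearing in the statement, and being nonzero non-squares they yield the second alternative.

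The core difficulty is therefore the case where at least two of the $q_i$ are nonzero squares, say $q_1 = {v_1}^2$ and $q_2 = {v_2}^2$, which from $l^i \ins {v_i}^2 = 0$ forces $v_i \in \Span{l^i}^\perp$. If $\Span{v_1} = \Span{v_2}$, write $v_1 = \alpha v$, $v_2 = \beta v$ for a common $v$; then $\beta^2 q_1 - \alpha^2 q_2 = 0$ gives $(\beta^2 l^2 - \alpha^2 l^1) l^3 l^4 \ins f = 0$. If the projective class of $\beta^2 l^2 - \alpha^2 l^1$ is distinct from $\Span{l^3}, \Span{l^4}$ this is the first alternative; otherwise it coincides with one of them, producing $(l^i)^2 l^j \ins f = 0$ with $\{i, j\} = \{3, 4\}$, and \autoref{Double} with $x^1 := l^j$, $x^2 := l^i$ concludes. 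If instead $\Span{v_1} \ne \Span{v_2}$, I would introduce the unique (up to scalar) nonzero $l^0 \in S^1$ with $l^0 \ins v_1 = l^0 \ins v_2 = 0$. A direct check shows $(\alpha l^1 + \beta l^2, l^0, l^3, l^4)$ is apolar to $f$ for every $\alpha, \beta \in \K$, and its three distinguished contractions behave as follows: $(\alpha l^1 + \beta l^2) l^3 l^4 \ins f = \alpha {v_2}^2 + \beta {v_1}^2$ is a square only when $\alpha\beta = 0$ (the line through $\Span{{v_1}^2}$ and $\Span{{v_2}^2}$ meets the Veronese of squares only at these two endpoints), while the other two, $(\alpha l^1 + \beta l^2) l^0 l^i \ins f$ for $i \in \{3, 4\}$, trace projective lines in $\Ps S_2$ which---since the Veronese surface of squares contains no projective line---meet the locus of squares in finite sets. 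For $[\alpha : \beta]$ outside these finite exceptions (and outside the finite locus where distinctness of the 4-tuple fails), the second alternative holds.

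The principal hurdle is the degenerate sub-case where one of those projective lines collapses to a single point lying on the Veronese, i.e., $l^1 l^0 l^i \ins f$ and $l^2 l^0 l^i \ins f$ are proportional nonzero squares. Then a suitable linear combination gives $(l^2 - c l^1) l^0 l^i \ins f = 0$; I would observe that a priori $l^0 \notin \Span{l^1, l^2}$ (otherwise one checks that $v_1, v_2$ are both forced into the one-dimensional $\Span{l^1, l^2}^\perp$, contradicting $\Span{v_1} \ne \Span{v_2}$), so the only way the resulting four linear forms fail to be distinct is $\Span{l^2 - c l^1} = \Span{l^i}$. This forces $l^i \in \Span{l^1, l^2}$ and $l^0 (l^i)^2 \ins f = 0$, whereupon \autoref{Double} with $x^1 := l^0$ and $x^2 := l^i$ delivers the conclusion.
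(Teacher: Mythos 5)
Your overall strategy is genuinely different from the paper's. Where you restore apolarity by inserting the auxiliary form $l^0$ that annihilates both $v_1$ and $v_2$, the paper instead chooses the apolar quadruple so as to \emph{minimize} the number of square contractions, moves $\Span{l^1}$ in the pencil $\Ps\Span{v_1}^\perp$, and applies \autoref{Drop} to produce a new apolar line; the two mechanisms are not the same. Your reduction to the case of at least two nonzero square $q_i$'s and your sub-case $\Span{v_1}=\Span{v_2}$ are fine. But in the main sub-case $\Span{v_1}\ne\Span{v_2}$ there is a genuine gap: nothing prevents $\Span{l^0}$ from coinciding with $\Span{l^3}$ or $\Span{l^4}$. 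This happens exactly when, say, $l^3$ annihilates both $v_1$ and $v_2$, i.e.\ $\Span{v_1}=\Span{l^1,l^3}^\perp$ and $\Span{v_2}=\Span{l^2,l^3}^\perp$; these are linear conditions on $f$ satisfied on a positive-dimensional family of quintics, so the case cannot be dismissed. In that situation your quadruple $\left(\alpha l^1+\beta l^2,\,l^0,\,l^3,\,l^4\right)$ has a repeated entry for \emph{every} $[\alpha:\beta]$, so ``the finite locus where distinctness of the 4-tuple fails'' is not finite and the construction collapses. One does get $l^1{l^3\,}^2l^4\ins f=l^3\ins {v_2}^2=0$ and $l^2{l^3\,}^2l^4\ins f=l^3\ins {v_1}^2=0$ there, hence ${l^3\,}^2l^4\ins f\in\Span{w^3}$ with $\Span{w}=\Span{l^1,l^2}^\perp$; if that cube vanishes, \autoref{Double} rescues you, but if it does not you are left with no admissible configuration and a new argument is needed. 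The paper's minimality-plus-\autoref{Drop} route avoids this because the new apolar form it produces lies in $\Ps\Span{v_1}^\perp$, and the only possible collisions, with $\Span{x^3}$ or $\Span{x^4}$, are precisely the ones \autoref{Double} handles.

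A smaller point: your parenthetical justification that $l^0\notin\Span{l^1,l^2}$ is incomplete when $\Span{l^0}$ equals $\Span{l^1}$ or $\Span{l^2}$ (then only one of $v_1,v_2$ is forced into $\Span{l^1,l^2}^\perp$, so no contradiction with $\Span{v_1}\ne\Span{v_2}$ arises). The conclusion you actually need, $\Span{l^2-cl^1}\ne\Span{l^0}$, still holds in that sub-case, but for a different reason, and this should be said explicitly.
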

\begin{proof}
According to \cite[Proposition~2.7]{BD}, we can find distinct $\Span{x^1}$, $\Span{x^2}$, $\Span{x^3}$, $\Span{x^4}$ in $\Ps S^1$ such that $x^1x^2x^3x^4\ins f=0$. For each $i\in\{1,2,3,4\}$ let
\[
q_i:=\left(\prod_{j\ne i}x^j\right)\ins f\;,
\]
and let us denote by $\nu$ the number of indices $i$ such that $q_i$ is a square. Of course, we can choose $\Span{x^1},\Span{x^2},\Span{x^3},\Span{x^4}$, in such a way that $\nu$ is minimum (among all the choices of distinct $\Span{y^1},\Span{y^2},\Span{y^3},\Span{y^4}\in\Ps S^1$ with $y^1y^2y^3y^4\ins f=0$), and order them so that $q_i$ is a square exactly when $1\le i\le\nu$. If $\nu\le 1$, the second statement is true with $l^i:=x^i$ for all $i$. Therefore, we can assume $\nu\ge 2$. We can also assume that $q_1\ne 0$, because in the opposite case the first statement is true with $\left(l^1,l^2,l^3\right):=\left(x^2,x^3,x^4\right)$.

Since $\nu\ge 2$ and $q_1\ne 0$, we have $q_1={v_1}^2$ for some nonzero $v_1\in S_1$. Let us consider the algebraic curve
\[
C:=\Ps\Span{v_1}^\perp\times\left\{\Span{x^2}\}\times\{\Span{x^3}\}\times\{\Span{x^4}\right\}\subset\Ps S^1\times\Ps S^1\times\Ps S^1\times\Ps S^1\;.
\]
To check that, for each $i$, the set
\[
\left\{\Big(\Span{y^1},\Span{y^2},\Span{y^3},\Span{y^4}\Big): \left(\prod_{j\ne i}y^j\right)\ins f\text{ is a square}\right\}
\]
is algebraic in $\Ps S^1\times\Ps S^1\times\Ps S^1\times\Ps S^1$ is not difficult. It follows that there exists a nonempty (Zariski) open subset $U$ of $C$ such that for all $\left(\Span{l^1},\Span{x^2},\Span{x^3},\Span{x^4}\right)\in U$ and $i$ with $\nu<i\le 4$, we have that
\[
\left(l^1\prod_{j\ne i}x^i\right)\ins f
\]
is not a square. Because of minimality of $\nu$, we also have that $l^1x^3x^4\ins f$ must be a square for all $\Span{l^1}$ different from $\Span{x^3},\Span{x^4}$ and such that \[\left(\Span{l^1},\Span{x^2},\Span{x^3},\Span{x^4}\right)\in U\;.\]
Now let us look at \autoref{Drop}, with $x^3x^4\ins f$ in place of $f$ and $V:=\Span{v_1}^\perp$. In view of the above said, the first statement in that lemma can not occur. Hence, $l^1x^3x^4\ins f=0$ for some $\Span{l^1}\in\Ps\Span{v_1}^\perp$.

If $\Span{l^1}$ is different from $\Span{x^3},\Span{x^4}$, then the first statement in the lemma under proof is true with $l^2:=x^3$, $l^3:=x^4$. If not, then $\Span{l^1}$ coincides with $\Span{x^3}$ or $\Span{x^4}$, and the result follows from \autoref{Double} with $\left(x^4,x^3\right)$ or, respectively, $\left(x^3,x^4\right)$, in place of $\left(x^1,x^2\right)$.
\end{proof}

The above result can easily be refined as follows.

\begin{rem}\label{Cubes}
Let $f\in S_5$, with $\dim S_1=3$. Then (at least) one of the following facts is true:
\begin{itemize}
\item there exist distinct $\Span{l^1},\Span{l^2}\in\Ps S^1$ such that $l^1l^2\ins f=0$; or
\item there exist distinct $\Span{l^1},\Span{l^2},\Span{l^3}\in\Ps S^1$ such that $l^1l^2l^3\ins f=0$ and no one of $l^1l^3\ins f$, $l^1l^2\ins f$ is a cube; or
\item there exist distinct $\Span{l^1},\Span{l^2},\Span{l^3},\Span{l^4}\in\Ps S^1$ such that $l^1l^2l^3l^4\ins f=0$ and no one of $l^1l^3l^4\ins f$, $l^1l^2l^4\ins f$, $l^1l^2l^3\ins f$ is a square.
\end{itemize}
Indeed, if the first statement listed in \autoref{Square} is false, then the second statement is true. But that statement coincides with the third statement here. Then, let us assume that the first statement in \autoref{Square} is true. In this case, it suffices to reiterate the arguments in the proof of \autoref{Square}, with $\Span{l^1},\Span{l^2},\Span{l^3}\in\Ps S^1$ in place of $\Span{x^1},\Span{x^2},\Span{x^3},\Span{x^4}\in\Ps S^1$, and at the end exploit \autoref{Double3} instead of \autoref{Double}.
\end{rem}

\section{Decomposition in a sum of ten fifth powers}

\begin{prop}\label{Decomp}
Let $f\in S_5$, with $\dim S_1=3$. If there exist distinct $\Span{l^1},\ldots,\Span{l^4}\in\Ps S^1$ such that
\begin{itemize}
\item $l^1l^2l^3l^4\ins f=0$,
\item $l^1l^2l^3\ins f$ and $l^1l^2l^4\ins f$ are not squares,
\item $l^1l^3l^4\ins f\ne 0$ and $l^2l^3l^4\ins f\ne 0$,
\end{itemize}
then $\rk f\le 10$.
\end{prop}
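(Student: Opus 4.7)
The strategy is to decompose $f$ as a sum of four binary quintics, one supported on each of the four lines $\ell_i := \{l^i = 0\} \subset \Ps S_1$, and bound the rank of each summand. Set $V_i := \Sy^5 \Span{l^i}^\perp \subset S_5$, the six-dimensional space of fifth powers of linear forms vanishing on $\ell_i$. Since $V_i \subset \Ker \con_{l^i}$, one has $V_1 + V_2 + V_3 + V_4 \subseteq \Ker \con_{l^1 l^2 l^3 l^4} \cap S_5$; in general position of the four lines both sides have dimension $18$, so a decomposition $f = f_1 + f_2 + f_3 + f_4$ with $f_i \in V_i$ exists. The pairwise intersections $V_i \cap V_j = \Span{v_{ij}^5}$ (with $v_{ij}$ spanning $\ell_i \cap \ell_j$) are one-dimensional, endowing the decomposition with six parameters of freedom.

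Translating to the binary setting on each line, set $p_i := \prod_{j \ne i} \bar l^j \in \Sy^3(S^1/\Span{l^i})$ and $q_i := (\prod_{j \ne i} l^j) \ins f$; since $l^k$ annihilates $f_k$ for $k \ne i$, one has $p_i \ins f_i = q_i$, so $f_i \in W_{p_i, q_i}$ (see \autoref{WLA}, a four-dimensional subspace of $V_i$). The hypotheses translate exactly as: $q_3, q_4$ are not squares (conditions on $l^1 l^2 l^4 \ins f$, $l^1 l^2 l^3 \ins f$), and $q_1, q_2 \neq 0$ (the remaining two conditions). The plan is then to arrange $\rk f_3 = \rk f_4 = 2$ and $\rk f_1 = \rk f_2 = 3$, giving $\rk f \le 2 + 2 + 3 + 3 = 10$. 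For the first two, apply \autoref{Summary}: the rank-two loci of $W_{p_3, q_3}$ and $W_{p_4, q_4}$ are birationally parameterized (off finite exceptional sets) by the polynomials $\mathbf{r}_3, \mathbf{r}_4$ of \autoref{DS}. Choosing $f_3, f_4$ in these loci, together with the residual one-parameter splittings in $V_1 + V_2$ provided by $\Span{v_{12}^5}$, yields a multi-parameter family of decompositions with $\rk f_3, \rk f_4 \le 2$.

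The delicate point—and the main obstacle—is to exhibit within this family a choice for which both $f_1$ and $f_2$ have rank at most $3$. By \autoref{Generic}, rank $\le 3$ is a Zariski-open condition on binary quintics, so "generically" this should hold. The real work is to exclude that the family is entirely forced into the rank-$\ge 4$ closed stratum in $V_1$ or $V_2$. Here \autoref{Lr} (and the underlying \autoref{Ka}) intervenes: were $f_2$, say, of rank $\ge 4$ for infinitely many parameter values, then applying \autoref{Lr} to the appropriate $W$-configuration (with $x^3$ playing the role of the line being used to lift) would pin certain contractions of $f_2$ to a finite set; conditions (d) and (e), which ensure $q_1, q_2 \ne 0$ and hence that these contractions move nontrivially as the parameters vary, rule this out. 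Pushing this rigidity argument through—including gracefully handling degenerate configurations where three of the four lines are concurrent, so the dimension counts above degrade—constitutes the main technical effort of the proof.
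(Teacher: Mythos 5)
Your high-level architecture matches the paper's: decompose $f$ into four binary quintics supported on the lines $l^i=0$, force the two summands on $l^3=0$ and $l^4=0$ into the rank-two loci of $W_{p_3,q_3}$ and $W_{p_4,q_4}$ via \autoref{Summary}, and use \autoref{Lr}/\autoref{Ka} to get rank $\le 3$ for the remaining two, totalling $2+2+3+3=10$. But there is a genuine gap at the pivot of the argument. You assert that choosing $f_3,f_4$ in the rank-two loci "yields a multi-parameter family of decompositions," i.e.\ that $f-f_3-f_4$ then lies in $V_1+V_2$. This is false for independent choices: each rank-two locus is only a curve, so the pairs $(f_3,f_4)$ form a two-parameter family, while the condition $l^1l^2\ins(f-f_3-f_4)=0$ needed for the residual to split as $f_1+f_2$ is a system of linear conditions on a cubic that has no reason to be satisfied. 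The entire first half of the paper's proof exists to overcome exactly this: it uses the explicit parameterizations $\mathbf{r}_{p,x_0,x_1}$ of \autoref{Formulas} to build \emph{synchronized} families $\mathbf{f}_3,\mathbf{f}_4\in\mathbf{S}$ over a single parameter $[\lambda,\mu]\in\Ps^1$. Concretely, one checks that $l^1l^2\ins\mathbf{f}_4$ factors as $a^{14}a^{24}\mathbf{f}'_{4;\{1,2\}}$ with $\mathbf{f}'_{4;\{1,2\}}$ tracing a line in $\Ps S_3$, that $240\,t^0\left(l^1l^2\ins f\right)-\mathbf{f}'_{4;\{1,2\}}$ traces the \emph{same} line $\Ps W_{l^4+(l^3),\,l^1l^2l^4\ins f}$ as the corresponding factor of $l^1l^2\ins\mathbf{r}_3$, and that a linear change of $(t^0,t^1)$ therefore identifies the two; this is what makes the residual $\mathbf{f}_{12}$ in the identity $240a^{13}a^{23}a^{14}a^{24}t^0f=t^0\mathbf{f}_{12}+a^{14}a^{24}\mathbf{f}_3+a^{13}a^{23}\mathbf{f}_4$ satisfy $l^1l^2\ins\mathbf{f}_{12}=0$. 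Without this compatibility step your second stage has no family of residuals $f_{12}$ to feed into \autoref{Lr}.

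Two smaller points. First, the existence of \emph{some} decomposition $f=f_1+\cdots+f_4$ with $f_i\in V_i$ does not need your dimension count or any general-position assumption on the four lines; it follows from $l^1l^2l^3l^4\ins f=0$ by apolarity (this is how \autoref{Bound} uses it), so the worry about concurrent configurations is a red herring. Second, your account of the role of the hypotheses $l^1l^3l^4\ins f\ne 0$ and $l^2l^3l^4\ins f\ne 0$ is close but imprecise: in the paper they guarantee that the maps $[\lambda,\mu]\mapsto\Span{\left(l^il^j\ins\mathbf{f}_{12}\right)\restriction_{(\lambda,\mu)}}$ are defined and one-to-one off finite sets, which is what lets one conclude that $\Span{l^2\ins f_{12}}\notin E_1$ and $\Span{l^1\ins f_{12}}\notin E_2$ for all but finitely many parameter values; only then does the one-parameter freedom $f_{12}=\left(f_1+\nu{v_{12}}^5\right)+\left(f_2-\nu{v_{12}}^5\right)$ produce a choice with $\rk f_1\le 3$ and $\rk f_2\le 3$ simultaneously.
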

\begin{proof}
Let us consider the rings $T_\bullet:=\Sy^\bullet\Span{l^4}^\perp=\Ker\partial_{l^4}$ and $T^\bullet:=S^\bullet/\left(l^4\right)$ with the apolarity pairing induced by that between $S_\bullet$ and $S^\bullet$. Let $p:=l^1l^2l^3+\left(l^4\right)\in T^3$ and note that $q:=l^1l^2l^3\ins f\in T_2$. In view of \autoref{Summary} (with $T$ in place of $S$) and \autoref{DS}, we can consider $\mathbf{r}_4:=\mathbf{r}_{p,x_0,x_1}\in\mathbf{T}:=\K\left[t^0,t^1\right]\otimes T_\bullet$, with $x_0x_1=q$. We have that $\rk\,\mathbf{r}_4\!\restriction_{(\lambda,\mu)}=2$ for all $\left[\lambda,\mu\right]$ outside a finite subset of $\Ps^1$. According to \autoref{Formulas}, we can find $a^1,a^2,a^3\in\Span{t^0,t^1}$ with the properties indicated in that statement.

In the present proof, to turn $a^3$ into $t^0$ by means of a linear change of coordinates will be convenient. To this end, we can certainly find $s^0,s^1$ such that $\Span{s^0,s^1}=\Span{t^0,t^1}$ and $a^3\restriction_{\left(s^0,s^1\right)}=t^0$, and set
\[
\mathbf{f}_4:=\mathbf{r}_4\restriction_{\left(s^0,s^1\right)}\;,\quad a^{i4}:=a^i\restriction_{\left(s^0,s^1\right)}\;,\qquad\forall i\in\{1,2,3\}
\]
(hence $a^{34}=t^0$). Since $T_\bullet$ is a subring of $S_\bullet$, $\mathbf{f}_4$ can be regarded as an element of $\mathbf{S}=\K\left[t^0,t^1\right]\otimes S_\bullet$. Of course, still $\rk\,\mathbf{f}_4\!\restriction_{(\lambda,\mu)}=2$ for all $\left[\lambda,\mu\right]$ outside a finite subset of $\Ps^1$. Moreover, let $\Span{v_{34}}:=\Span{l^3,l^4}^\perp=\Span{l^3+\left(l^4\right)}^\perp$ and, for each $i\in\{1,2\}$, let $\left[\lambda_i,\mu_i\right]\in\Ps^1$ be the root of $a^{i4}$ (that is, $a^{i4}\restriction_{\left(\lambda_i,\mu_i\right)}=0$).

The properties of $\mathbf{r}_4,a^1,a^2,a^3$ stated in \autoref{Formulas} lead (in particular) to:
\begin{itemize}
\item $\forall I\subseteq\{1,2,3\}$, $\prod_{i\in I}a^{i4}$ divides $\left(\prod_{i\in I}l^i\right)\ins\mathbf{f}_4$;
\item with $\mathbf{f}'_{4;I}\in\mathbf{S}$ being defined by setting
\[
\left(\prod_{i\in I}l^i\right)\ins\mathbf{f}_4=\left(\prod_{i\in I}a^{i4}\right)\mathbf{f}'_{4;I}\;,
\]
$\forall I\subsetneq\{1,2,3\}$ the map \[[\lambda,\mu]\mapsto\Span{\mathbf{f}'_{4;I}\restriction_{(\lambda,\mu)}}\]
is one-to-one outside a finite subset of $\Ps^1$; moreover,
\begin{equation}\label{f4}
l^1l^2l^3\ins\mathbf{f}_4=240a^{14}a^{24}t^0q=240a^{14}a^{24}t^0\left(l^1l^2l^3\ins f\right)\;;
\end{equation}
\item for each $j\in\{1,2\}$, $l^j\left(v_{34}\right)=0\;\Rightarrow\;a^{j4}\restriction_{(0,1)}=0$;
\item $\mathbf{f}_4\restriction_{\left(0,1\right)}\in\Span{{v_{34}}^5}$.
\end{itemize}

Now, let us consider $l^3$ in place of $l^4$ and $l^1l^2l^4+\left(l^3\right)$, $l^1l^2l^4\ins f$ in place of $p$, $q$. As before, we get $\mathbf{r}_3\in\mathbf{S}^3_5$, $b^1,b^2,b^4\in\K\left[t^0,t^1\right]$ with the properties listed in \autoref{Formulas} for $\mathbf{r},a^1,a^2,a^3$. Let $\mathbf{r}'_{3;\{1,2\}}\in\mathbf{S}^1_3$ be defined, as in the lemma, by the equality
\[
l^1l^2\ins\mathbf{r}_3=b^1b^{2}\mathbf{r}'_{3;\{1,2\}}\;.
\] 
By one of the properties listed in the lemma, we have
\[
l^4\ins \mathbf{r}'_{3;\{1,2\}}=240b^4\left(l^1l^2l^4\ins f\right)\;,
\]
hence the map
\[
[\lambda,\mu]\mapsto\Span{\mathbf{r}'_{3;\{1,2\}}\restriction_{(\lambda,\mu)}}
\]
(which is one-to-one outside a finite subset of $\Ps^1$) takes values in the projective line $\Ps W_{l^4+(l^3),l^1l^2l^4\ins f}\subset\Ps S_3$ (see \autoref{WLA}). Since $\mathbf{r}'_{3;\{1,2\}}\in\mathbf{S}^1_3$, that map is in fact everywhere defined and, actually, an isomorphism $\Ps^1\overset{\sim}{\longrightarrow}\Ps W_{l^4+(l^3),l^1l^2l^4\ins f}$ of projective lines. Since $\mathbf{f}'_{4;\{1,2\}}$ enjoys the same property (but with a different range line), and
\[
l^4\ins\mathbf{f}'_{4;\{1,2\}}=0\;,\quad l^4\ins\left(l^1l^2\ins f\right)=l^1l^2l^4\ins f\ne 0\;,
\]
the polynomial
\[
240t^0\left(l^1l^2\ins f\right)-\mathbf{f}'_{4;\{1,2\}}\:,
\]
which for reasons that will soon become clear we denote by
\[
\mathbf{f}'_{3;\{1,2\}}\;,
\]
again defines an isomorphism of $\Ps^1$ into a projective line in $\Ps S_3$. But \eqref{f4} implies
\[
l^3\ins\mathbf{f}'_{4;\{1,2\}}=240t^0\left(l^1l^2l^3\ins f\right)\;,
\]
that in turn gives
\[
l^3\ins\mathbf{f}'_{3;\{1,2\}}=0\;.
\] 
Moreover,
\begin{equation}\label{fp3}
l^4\ins\mathbf{f}'_{3;\{1,2\}}=240t^0\left(l^1l^2l^4\ins f\right)\;.
\end{equation}
The last two equalities proves that the range of the map
\[
[\lambda,\mu]\mapsto\Span{\mathbf{f}'_{3;\{1,2\}}\restriction_{(\lambda,\mu)}}
\]
is again $\Ps W_{l^4+(l^3),l^1l^2l^4\ins f}$, as it was for $\mathbf{r}'_{3;\{1,2\}}$. One easily deduce that there exists an invertible linear change on $(t^0,t^1)$ that turns $\mathbf{r}'_{3;\{1,2\}}$ into $\mathbf{f}'_{3;\{1,2\}}$. Performing that change of coordinates on $\mathbf{r}_3,b^1,b^2,b^4$ (\footnote{The change is not intended to act elsewhere; in particular, we keep $\mathbf{f}_4$ unaltered.}), we find $\mathbf{f}_3,a^{13},a^{23},a^{43}$ that enjoy a similar list of properties as $\mathbf{f}_4,a^{14},a^{24},a^{34}$ (note that the notation $\mathbf{f}'_{3;\{1,2\}}$ is coherent with that one in the list, and that \eqref{fp3} implies $a^{43}=t^0$).

Let
\[
\mathbf{g}:=a^{14}a^{24}\mathbf{f}_3+a^{13}a^{23}\mathbf{f}_4\;.
\]
Taking into account that $\mathbf{f}'_{3;\{1,2\}}=240t^0\left(l^1l^2\ins f\right)-\mathbf{f}'_{4;\{1,2\}}$ by definition, we have
\begin{equation}\label{g12}
l^1l^2\ins\mathbf{g}=240a^{13}a^{14}a^{23}a^{24}t^0\left(l^1l^2\ins f\right)\;.
\end{equation}
Let us show that
\begin{equation}\label{Erase}
\mathbf{g}\restriction_{\left(0,1\right)}=0\;.
\end{equation}
Recall that $\mathbf{f}_3\restriction_{\left(0,1\right)},\mathbf{f}_4\restriction_{\left(0,1\right)}\in\Span{{v_{34}}^5}$, henceforth $\mathbf{g}\restriction_{\left(0,1\right)}\in\Span{{v_{34}}^5}$.  If $l^1\ins v_{34}=0$, then $a^{13}\restriction_{\left(0,1\right)}=a^{14}\restriction_{\left(0,1\right)}=0$ by one of the properties in the respective lists, and \eqref{Erase} follows. If $l^2\ins v_{34}=0$ we can argue in the same way. Suppose then that $l^1\ins v_{34}\ne 0$,  $l^2\ins v_{34}\ne 0$. In this case, if $\mathbf{g}\restriction_{\left(0,1\right)}$ had been a nonzero multiple of ${v_{34}}^5$, then $l^1l^2\ins\mathbf{g}\restriction_{\left(0,1\right)}$ would have been a nonzero multiple of ${v_{34}}^3$, which is excluded by \eqref{g12}. Hence \eqref{Erase} is proved.

Now, by \eqref{Erase}, there exists $\mathbf{f}_{34}\in\mathbf{S}^4_5$ such that $t^0\mathbf{f}_{34}=\mathbf{g}$. Let
\[
\mathbf{f}_{12}:=240a^{13}a^{23}a^{14}a^{24}f-\mathbf{f}_{34}\;.
\]
We have
\begin{equation}\label{F1234}
240a^{13}a^{23}a^{14}a^{24}t^0f=t^0\mathbf{f}_{12}+a^{14}a^{24}\mathbf{f}_3+a^{13}a^{23}\mathbf{f}_4
\end{equation}
and from \eqref{g12} we easily deduce
\begin{equation}\label{f12}
l^1l^2\ins\mathbf{f}_{12}=0\;.
\end{equation}
Let us also recall that $a^{34}=t^0=a^{43}$. We have
\[
l^3\ins t^0\mathbf{f}_{34}=l^3\ins\mathbf{g}=a^{14}a^{24}\left(l^3\ins\mathbf{f}_3\right)+a^{13}a^{23}\left(l^3\ins\mathbf{f}_4\right)=a^{13}a^{23}t^0\mathbf{f}'_{4;\{3\}}\;,
\]
hence
\[
l^3\ins\mathbf{f}_{34}=a^{13}a^{23}\mathbf{f}'_{4;\{3\}}\;.
\]
Therefore
\[
l^3\ins\mathbf{f}_{12}=a^{13}a^{23}\left(240a^{14}a^{24}\con_{l^3}f-\mathbf{f}'_{4;\{3\}}\right)\;,
\]
which leads to
\[
l^1l^3\ins\mathbf{f}_{12}=a^{13}a^{23}a^{14}\left(240a^{24}\con_{l^1l^3}f-\mathbf{f}'_{4;\{13\}}\right)\;.
\]
As before, we have that the map $[\lambda,\mu]\mapsto \Span{\mathbf{f}'_{4;\{13\}}\restriction_{(\lambda,\mu)}}$ is an isomorphism of $\Ps^1$ onto a projective line in $\Ps S_3$ (in what follows, to use that it is one-to-one outside a finite subset of $\Ps^1$ would suffice). From
\[
l^4\ins\con_{l^1l^3}f=l^1l^3l^4\ins f\ne 0\;,\qquad l^4\ins\mathbf{f}'_{4;\{13\}}=0
\]
we deduce that the map
\[
\left[\lambda,\mu\right]\mapsto\Span{\left(l^1l^3\ins\mathbf{f}_{12}\right)\restriction_{(\lambda,\mu)}}
\]
is defined outside a finite subset of $\Ps^1$ and one-to-one. Moreover, taking into account \eqref{f12} and possibly enlarging the forbidden finite subset of $\Ps^1$, we can assume that the above map takes values in $A_{l^4+\left(l^2\right),l^1l^3l^4\ins f}$ (see \autoref{WLA}, with the dually paired rings $S^\bullet/\left(l^2\right)$, $\Sy^\bullet\Span{l^2}^\perp=\Ker\partial_{l^2}$ in place of $S^\bullet$, $S_\bullet$). The same argument works for any other $l^il^j\ins\mathbf{f}_{12}$ with $(i,j)\in\{1,2\}\times\{3,4\}$: outside a suitable finite subset of $\Ps^1$, the map
\[
\left[\lambda,\mu\right]\mapsto\Span{\left(l^il^j\ins\mathbf{f}_{12}\right)\restriction_{(\lambda,\mu)}}
\]
is one-to-one and takes values in $A_{l^{7-j}+\left(l^{3-i}\right),l^il^3l^4\ins f}$.

Now, we exploit \autoref{Lr}, with the dually paired rings $S^\bullet/\left(l^1\right)$, $\Sy^\bullet\Span{l^1}^\perp=\Ker\partial_{l^1}$ in place of $S^\bullet$, $S_\bullet$, and with $q=l^2l^3l^4\ins f$, $x^1:=l^3+\left(l^1\right)$, $x^2:=l^4+\left(l^1\right)$, $x^3:=l^2+\left(l^1\right)$. We get sets $E$, $F$, that here we shall denote by $E_1$, $F_1$. In the same way, by exchanging the roles of $l^1$, $l^2$, we get sets $E_2$, $F_2$.

We know that each of $l^3\ins\left(l^2\ins\mathbf{f}_{12}\right)$ and $l^4\ins\left(l^2\ins\mathbf{f}_{12}\right)$ gives a map, respectively into $A_{l^4+\left(l^1\right),l^2l^3l^4\ins f}$ and $A_{l^3+\left(l^1\right),l^2l^3l^4\ins f}$, that is one-to-one outside a finite subset of $\Ps^1$. It follows that outside a (fixed) finite subset of $\Ps^1$, both maps take values outside the finite set $F_1$. According to \autoref{Lr}, this implies that the map into $A_{l^3l^4+\left(l^1\right),l^2l^3l^4\ins f}$ given by $l^2\ins\mathbf{f}_{12}$, outside the mentioned finite subset of $\Ps^1$ takes values outside $E_1$. Similarly, the map into $A_{l^3l^4+\left(l^2\right),l^1l^3l^4\ins f}$ given by $l^1\ins\mathbf{f}_{12}$, outside some finite subset of $\Ps^1$, takes values outside $E_2$.

By the above said, we can fix $[\lambda,\mu]\ne [0,1]$ such that
\[
a^{ij}\restriction_{(\lambda,\mu)}\ne 0\;,\quad\forall (i,j)\in\{1,2\}\times\{3,4\}
\]
and the polynomials
\[
f_{12}:=\mathbf{f}_{12}\restriction_{(\lambda,\mu)}\;,\quad f_3:=\mathbf{f}_3\restriction_{(\lambda,\mu)}\;,\quad f_4:=\mathbf{f}_4\restriction_{(\lambda,\mu)}
\]
(do not vanish and) satisfy the conditions
\[
\Span{l^2\ins f_{12}}\in\Ps S_4\setminus E_1\;,\quad\Span{l^1\ins f_{12}}\in\Ps S_4\setminus E_2\;,\quad\rk f_3=2\;,\quad\rk f_4=2\;.
\]
By \eqref{f12}, $f_{12}$ admits (many) decompositions $f_{12}=f_1+f_2$ with $f_1,f_2\in S_5$, $l^1\ins f_1=0$, $l^2\ins f_2=0$ (and $l^1\ins f_2\ne 0$, $l^2\ins f_1\ne 0$ because $l^1\ins f_{12}\ne 0$ and $l^2\ins f_{12}\ne 0$). If we set $\Span{v_{12}}:=\Span{l^1,l^2}^\perp$, then for every scalar $\nu$, $f_{12}=\left(f_1+\nu{v_{12}}^5\right)+\left(f_2-\nu{v_{12}}^5\right)$ is again a decomposition of the same kind. Therefore, since $\Span{l^2\ins f_{12}}\not\in E_1$, $\Span{l^1\ins f_{12}}\not\in E_2$, we can fix $f_1,f_2$ such that $\rk\Span{f_1}\le 3$, $\Span{f_2}\le 3$.

By evaluating \eqref{F1234} at $(\lambda,\mu)$, we end up with a decomposition
\[
f=g_1+g_2+g_3+g_4
\]
with $\Span{g_i}=\Span{f_i}$ for all $i$ and
\[
\rk f_1\le 3\;,\quad\rk f_2\le 3\;,\quad\rk f_3=2\;,\quad\rk f_4=2\;.
\]
Hence $\rk f\le 10$.
\end{proof}

\begin{prop}\label{Bound}
If $f\in S_5$ and $\dim S_1=3$ then $\rk f\le 10$.
\end{prop}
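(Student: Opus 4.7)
The plan is to apply \autoref{Cubes}, which furnishes for any ternary quintic $f$ one of three configurations of apolar linear forms. In the first case, $l^1l^2\ins f=0$ for two distinct $\Span{l^1},\Span{l^2}\in\Ps S^1$: a dimension count gives $\Ker\con_{l^1l^2}\cap S_5=(\Ker\con_{l^1}+\Ker\con_{l^2})\cap S_5$ (both sides have dimension $11$), so $f=f_1+f_2$ with each $f_i$ a binary quintic in $\Span{l^i}^\perp$, and the maximum Waring rank $5$ for binary quintics yields $\rk f\le10$. In the third case, four distinct $l^1,\ldots,l^4$ satisfy $l^1l^2l^3l^4\ins f=0$ with three non-square conditions, which match three of the four hypotheses of \autoref{Decomp}; if also $l^2l^3l^4\ins f\ne 0$ we conclude by \autoref{Decomp}, and otherwise we reduce to the second case with triple $(l^2,l^3,l^4)$, noting that each binary cubic $l^il^j\ins f$ for $\{i,j\}\subset\{2,3,4\}$ is forced to be a non-cube (since $l^il^j\ins f=v^3$ would make $l^1l^il^j\ins f=3l^1(v)v^2$ a square, against the hypothesis).

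The delicate case is the second: three distinct $\Span{l^1},\Span{l^2},\Span{l^3}$ with $l^1l^2l^3\ins f=0$ and $l^1l^2\ins f$, $l^1l^3\ins f$ not cubes. Here \autoref{Decomp} cannot be invoked directly, since any relabelling of a $4$-tuple including all three of our linear forms makes one of the triples appearing in \autoref{Decomp}'s hypothesis equal to $\{l^1,l^2,l^3\}$, and the corresponding contraction vanishes, violating either the ``not a square'' or the ``nonzero'' requirement. Instead, one decomposes $f=f_1+f_2+f_3$ with $f_i\in\Ker\con_{l^i}\cap S_5$ a binary quintic in the line $\Span{l^i}^\perp$; the decomposition has a three-dimensional ambiguity parametrised by the pure fifth powers $v_{ij}^5$, where $\Span{v_{ij}}:=\Span{l^i,l^j}^\perp$. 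Each $f_i$ then varies over a two-parameter affine slice inside an appropriate space $W_{p,q}$ in the sense of \autoref{WLA}, applied to the binary ambient on $\Span{l^i}^\perp$. The non-cube hypotheses together with \autoref{3} show that for a generic choice of the three ambiguity parameters, $\rk f_2\le 3$ and $\rk f_3\le 3$. For $f_1$ one checks that $\rk f_1\le 4$ generically: if $l^2l^3\ins f$ is not a cube, \autoref{3} applies and gives $\rk f_1\le 3$; if $l^2l^3\ins f=v^3$ is a cube, a direct analysis of $W_{l^2l^3,v^3}$ via the rank stratification of binary quintics (Sylvester's theorem on apolar ideals, as used in \autoref{Generic}) shows that the rank-$5$ locus in the two-parameter slice for $f_1$ is a proper subvariety, so $\rk f_1\le 4$ for generic ambiguity. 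Combining, $\rk f\le 3+3+4=10$.

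The main obstacle is the sub-case of the second case where $l^2l^3\ins f$ is a cube: the paper's existing lemmas do not directly prevent $f_1$ from being a binary quintic of rank $5$, and the explicit rank-stratification analysis just sketched must be carried out to reduce the bound to $4$. One also needs to verify that the three genericity conditions on the ambiguity parameters (one per $f_i$) are simultaneously satisfiable, which follows because each is a nonempty open condition on an irreducible three-dimensional parameter space, so their intersection is nonempty.
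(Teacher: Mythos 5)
Your proposal is correct and follows the same overall strategy as the paper: invoke \autoref{Cubes}; dispose of $k=2$ by splitting $f$ into two binary quintics; reduce $k=4$ to \autoref{Decomp} or, when $l^2l^3l^4\ins f=0$, back to $k=3$ via exactly the paper's observation that a cube $l^il^j\ins f=v^3$ would force $l^1l^il^j\ins f=3l^1(v)v^2$ to be a square; and in the case $k=3$ decompose $f=f_1+f_2+f_3$ into binary quintics of generic ranks $3$, $3$ and $4$, using \autoref{3} for the two pieces governed by the non-cube hypotheses. The one genuine divergence is the source of the bound $\rk f_1\le 4$ for a generic member of $\Ps W_1$: the paper simply cites \cite[Proposition~4.1]{BD}, which gives generic rank at most four on \emph{any} projective plane of binary quintics (see also the remark preceding \autoref{Generic}, where the borderline planes $\Ker\con_{x^2y}$ are analysed), whereas you propose to re-derive it in the only problematic sub-case, namely $l^2l^3\ins f$ a nonzero cube $v^3$. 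Your sketch there is sound and quick to complete: taking $m$ in the quotient ring $S^\bullet/(l^1)$ with $m(v)=0$, one finds $W_1=S_5\cap\Ker\con_{l^1}\cap\Ker\con_{ml^2l^3}$, and Sylvester's description of apolar ideals confines the rank-five locus of that plane to at most a line. So your route is self-contained where the paper's is a citation (one the paper itself flags as needing a repaired proof). Two small points to tidy: in the $k=3$ case you should first set aside $l^2l^3\ins f=0$ (it is the cube $0^3$, and \autoref{WLA} requires $q\ne 0$) by falling back to $k=2$, as the paper does; and the simultaneous-genericity argument is cleanest when phrased, as in the paper, on the irreducible variety $\sigma^{-1}(\Span{f})$ with the proper closed locus $\Ker\sigma$ removed.
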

\begin{proof}
Basically, we argue as in the proofs of \cite[Proposition~3.1]{D} and \cite[Proposition~4.2]{BD}. According to \autoref{Cubes}, there exist $k$ distinct $\Span{l^1},\ldots,\Span{l^k}\in\Ps S^1$, with $2\le k\le 4$, such that $l^1\cdots l^k\ins f=0$, and with additional properties when $k=3$ or $k=4$. Let us set
\[
V_i:=\Sy^5\Span{l^i}^\perp\subseteq S_5\;,
\]
for all $i\in\{1,\ldots ,k\}$, and define
\[
\sigma:\bigoplus_iV_i\to\sum_iV_i\;,\qquad\left(v_1,\ldots ,v_k\right)\mapsto\sum_iv_i\;.
\]
For each $i\in\{1,\ldots ,k\}$, we denote by $\pi_i:\bigoplus_iV_i\to V_i$ the projection map. Moreover, we set $W:=\sigma^{-1}(\Span{f})$ and denote by $\alpha_i$ the restriction $W\to W_i:=\pi_i(W)\subseteq V_i\subseteq\sum_jV_j$ of $\pi_i$. Since $l^1\cdots l^k\ins f=0$, we have $f\in\sum_iV_i$.

The simple case $k=2$ can immediately be worked out as follows. Since $f\in V_1+V_2$, we can pick $v\in W$ such that $\sigma(v)=f$. Therefore $f=\alpha_1(v)+\alpha_2(v)$. Since $\alpha_1(v),\alpha_2(v)$ can be regarded as binary forms, we have $\rk\alpha_1(v),\rk\alpha_2(v)\le 5$, and hence $\rk f\le 5+5=10$.

Suppose now $k=3$. Recall that, moreover,  $l^1l^3\ins f$ and $l^1l^2\ins f$ are not cubes (in particular, they do not vanish). We can also assume that $l^2l^3\ins f\ne 0$, otherwise we fall again in the case $k=2$. The form $t:=l^1l^2\ins f$ belongs to the ring of binary forms $T_\bullet:=\Sy^\bullet\Span{l^3}^\perp=\Ker\partial_{l^3}\subset S_\bullet$. In the dual ring $T^\bullet=S^\bullet/\left(l^3\right)$, let $p:=l^1l^2+\left(l^3\right)$. Note that $W_3=W_{p,t}$, because for each $v\in W$ we have $\alpha_1(v)+\alpha_2(v)+\alpha_3(v)=\sigma(v)\in\Span{f}$ and therefore
\[
l^1l^2\ins\alpha_3(v)=l^1l^2\ins\left(\alpha_1(v)+\alpha_2(v)+\alpha_3(v)\right)\in\Span{t}
\]
(this gives $W_3\subseteq W_{p,t}$; the equality follows, say, by dimension reasons). Since $t$ is not a cube, \autoref{3} gives a nonempty open subset $U_3\subseteq\Ps W_3$ such that $\rk f_3=3$ for all $\Span{f_3}\in U_3$. In the same way, we get a nonempty open subset $U_2\subseteq\Ps W_2$ such that $\rk f_2=3$ for all $\Span{f_2}\in U_2$. Finally, according to \cite[Proposition~4.1]{BD}, there exists a nonempty open subset $U_1\subseteq W_1$ such that $\rk f_1\le 4$ for all $\Span{f_1}\in U_1$.

Now, since the intersection of nonempty (Zariski) open subsets is nonempty, we can pick
\[
\Span{v}\in\left(\Ps\alpha_1\right)^{-1}\left(U_1\right)\cap\left(\Ps\alpha_2\right)^{-1}\left(U_2\right)\cap\left(\Ps\alpha_3\right)^{-1}\left(U_3\right)\;
\]
with $\sigma(v)=f$. We end up with a decomposition $f=\alpha_1(v)+\alpha_2(v)+\alpha_3(v)$, and hence $\rk f\le 4+3+3=10$.

Suppose, finally, that $k=4$. If $l^2l^3l^4\ins f=0$ we fall again in the case $k=3$ (with $l^2,l^3,l^4$ in place of $l^1,l^2,l^3$; note also that if, say, $l^2l^4\ins f$ is a cube, then $l^1l^2l^4\ins f$ is a square). If $l^2l^3l^4\ins f\ne 0$ the result follows from \autoref{Decomp}.
\end{proof}

%

\end{document}